\documentclass[3p]{article}
\usepackage{amsfonts,amsmath,amssymb,amsthm,stmaryrd,hyperref}
\usepackage[francais,english]{babel}
\usepackage[T1]{fontenc}
\usepackage[ansinew]{inputenc}  
\usepackage{graphics}
\usepackage{graphicx}
\usepackage{tikz}
\usepackage{multicol}
\usetikzlibrary{patterns}
\usepackage{amssymb,subfigure,lineno,url}
\usepackage{float}
\usepackage[draft]{showlabels}
\topmargin -1.5cm
\textwidth 16.5cm
\oddsidemargin -1mm
\textheight 22cm

\parindent=0cm

\newtheorem{theorem}{Theorem}
\newtheorem{definition}{Definition}
\newtheorem{lemma}{Lemma}
\newtheorem{proposition}{Proposition}
\newtheorem{remark}{ Remark}

\begin{document}
\title{Decay of mass for a semilinear heat equation with mixed local-nonlocal operators}

\author{Mokhtar Kirane \footnote{\noindent 
Department of Mathematics, College of Computing and Mathematical Sciences, Khalifa University, P.O. Box: 127788, Abu Dhabi, UAE; mokhtar.kirane@ku.ac.ae.
 \newline \indent $\,\,{}^{a}$  College of Engineering and Technology, American University of the Middle East, Kuwait; ahmad.fino@aum.edu.kw.
  \newline \indent $\,\,{}^{b}$ Lebanese University, Faculty of Sciences, Beirut, Lebanon; a.ayoub94@gmail.com. }, Ahmad Z. Fino$^{\,a}$, Alaa Ayoub$^{\,b}$}

\date{}
\maketitle

\begin{abstract}
In this paper, we are concerned with the Cauchy problem for the reaction-diffusion equation
$\partial_t u+t^\beta\mathcal{L} u= - h(t)u^p$ posed on $\mathbb{R}^N$, driven by the mixed local-nonlocal operator $\mathcal{L}=-\Delta+(-\Delta)^{\alpha/2}$, $\alpha\in(0,2)$,
and supplemented with a nonnegative integrable initial data, where $p>1$, $\beta\geq 0$, and $h:(0,\infty)\to(0,\infty)$ is a locally integrable function. We study the large time behavior of non-negative solutions and show that the nonlinear term determines the large time asymptotic for
$p\leq 1+{\alpha}/{N(\beta+1)},$ while the classical/anomalous diffusion effects win if $p>1+{\alpha}/{N(\beta+1)}$.
\end{abstract}

\maketitle

\noindent {\small {\bf MSC[2020]:} 35K57, 35B40, 35B51, 26A33, 35A01, 35B33} 

\noindent {\small {\bf Keywords:} Large time behavior of solutions, semilinear parabolic equations, mixed local-nonlocal operator, mass, critical exponent}

\section{Introduction}
Let $\mathcal{L}$ be the mixed local-nonlocal operator $\mathcal{L}=-\Delta+(-\Delta)^{\alpha/2}$, where $(-\Delta)^{\alpha/2}$ stands for the fractional Laplacian of order $0<\alpha<2$. We study the asymptotic behavior of the solutions of the
following Cauchy problem for the  following reaction-diffusion equation driven by both the classical and anomalous diffusion
\begin{equation}\label{eq}
\left\{\begin{array}{ll}\partial_t u+t^{\beta}\mathcal{L} u=-h(t)u^p,&\qquad x\in\mathbb{R}^N,\,t>0,\\\\
 u(x,0)= u_0(x)\geq0,&\qquad x\in\mathbb{R}^N,\\
 \end{array}\right.
\end{equation}
where $p>1$, $\beta\geq0$, $h:(0,\infty)\to(0,\infty)$,  $h\in L^1_{loc}(0,\infty)$, and
$$u_0\in L^1(\mathbb{R}^N)\cap C_0(\mathbb{R}^N).$$
The space $C_0(\mathbb{R}^n)$ denotes the space of all continuous functions tending to zero at infinity.\\

Large time behavior of solutions has been extensively studied in the literature. Concerning the purely local or nonlocal case $(-\Delta)^{s/2}$, $0<s\leq 2$,  the authors in \cite{Finokarch} have proven, for $\beta=0$, and $h\equiv 1$, that the critical exponent for the large time behavior of solutions of \eqref{eq} is $p=1+s/N$. In fact, the authors studied the decay of the mass  $\displaystyle M(t)= \int_{\mathbb{R}^N}u(x,t)\,dx$ of the solutions and proved the following:\\

\noindent $\bullet$ if $p\leq 1+s/N$, then $\lim\limits_{t\rightarrow\infty}M(t)=0$;\\
$\bullet$  if $p> 1+s/N$, then there exists $M_\infty\in(0,\infty)$ such that $\lim\limits_{t\rightarrow\infty}M(t)=M_\infty>0$.\\

Recently, the study of qualitative properties of solutions to elliptic/parabolic differential equations driven by the mixed local-nonlocal operator $\mathcal{L}=-\Delta+(-\Delta)^{\alpha/2}$, $\alpha\in(0,2)$, has attracted a great amount of attention (see \cite{Biagi,Biagi2,Biagi3,Bonforte2,Bonforte3,Pezzo,DeFilippis,Garain1,Garain2,Garain3,Garain4}). One of the main reasons for considering this type of operator is that  $\mathcal{L}$ appears naturally in applied sciences, to study the role of the impact caused by a local and a nonlocal change in a physical phenomenon. These type of operators also have applications in probability. Indeed, they are associated with the superposition of different types of stochastic processes, such as classical random walks and L\'evy flights. Essentially, when a particle can follow either process with certain probabilities, the resulting limiting diffusion equation can be described by an operator of the form $\mathcal{L}$. In addition, the mixed operators are applied to model a range of phenomena in science, including the analysis of optimal animal foraging strategies (see, e.g. \cite{Dipierro1,Dipierro2} and references therein). Regarding the asymptotic behavior of solutions including local/nonlocal operators $(-\Delta)^{s/2}$, $0<s\leq 2$, we refer the reader to \cite{BashirKirane,Finokarch,Jleli,KiraneQaf,KiraneQaf2}.\\

When the nonlinear term $-u^p$ is replaced by a power-like source term $u^p$, 
$$
\left\{\begin{array}{ll}\partial_t u+\mathcal{L} u=u^p,&\qquad x\in\mathbb{R}^N,\,t>0,\\\\
 u(x,0)= u_0(x)\geq0,&\qquad x\in\mathbb{R}^N,\\
 \end{array}\right.
$$
a full study concerning the global existence, blow-up, blow-up rate, as well as the asymptotic behavior of solutions of \eqref{eq}, have been recently studied by \cite{Biagi,Pezzo}. Note that, the operator $\mathcal{L}$ affects negatively to the scaling property, and the Fujita exponent is given by the fractional Laplacian i.e. $p_F=1+\alpha/N$. Namely, they proved that any nonnegative nontrivial solution blows up in finite time if $p\leq p_F$ while there exists a small data global in time solution when $p>p_F$. \\

Our goal is to contribute to the theory of large time behavior of solutions by generalizing the work of \cite{Finokarch,Jleli} to the mixed local-nonlocal operator $\mathcal{L}$. Our results can be summarized as follows. A unique non-negative solution of the Cauchy problem \eqref{eq} exists globally in time.  Hence, we  study the
decay properties of the mass 
$$ M(t)= \int_{\mathbb{R}^N}u(x,t)\,dx,$$
 of the solutions $u=u(x,t)$ to problem \eqref{eq}. We prove that $ \lim\limits_{t\to\infty}M(t)=M_\infty >0$ for $p>1+{\alpha}/{N(\beta+1)}$ (Theorem \ref{decay}, below), while $M(t)$ tends to zero as $t\to\infty$ if $1<p\leq 1+\alpha/N(\beta+1)$ (Theorem \ref{convto0}). We point out that problem \eqref{eq}, when $\beta=0$, and $h\equiv 1$, behaves like the problem with purely nonlocal operator $\mathcal{L}=(-\Delta)^{\alpha/2}$, $0<\alpha< 2$. The proof of Theorem \ref{decay} is based on the $L^p-L^q$ estimates of solutions as well as the comparison principle, while for Theorem \ref{convto0}, our proof approach is based on the method of nonlinear capacity estimates or the so-called rescaled test function method. The nonlinear capacity method was introduced to prove the non-existence of global solutions by  Baras and Pierre \cite{Baras-Pierre}, then used by Baras and Kersner in \cite{Baras}; later on, it was developed by Zhang in \cite{Zhang} and Mitidieri and Pohozaev in \cite{19}.  It was also used by Kirane et al.  in \cite{Kirane-Guedda,KLT},  and Fino et al in \cite{Fino1,Fino3,Fino4,Fino5}. \\

The paper is organized as follows. In Section \ref{sec2}, we present some definitions, terminologies, and preliminary results concerning the fractional Laplacian, the operator $\mathcal{L}$ and its heat kernel. Section \ref{sec3} is devoted to present our main results while their proofs are given in Sections \ref{sec4} and \ref{sec5}.\\


\section{Preliminaries}\label{sec2}
We first recall the definition and some properties related to the fractional Laplacian needed to prove Theorem \ref{convto0}.
\begin{definition}\cite{Silvestre}\label{def1}
Let $u\in\mathcal{S}$  be the Schwartz space of rapidly decaying $C^\infty$ functions in $\mathbb{R}^N$ and $s \in (0,1)$. The fractional Laplacian $(-\Delta)^s$ in $\mathbb{R}^N$ is a non-local operator given by
\begin{eqnarray*}
 (-\Delta)^s v(x)&:=& C_{N,s}\,\, p.v.\int_{\mathbb{R}^N}\frac{v(x)- v(y)}{|x-y|^{N+2s}}\,dy\\\\
&=&\left\{\begin{array}{ll}
\displaystyle C_{N,s}\,\int_{\mathbb{R}^N}\frac{v(x)- v(y)}{|x-y|^{N+2s}}\,dy,&\quad\hbox{if}\,\,0<s<1/2,\\
{}\\
\displaystyle C_{N,s}\,\int_{\mathbb{R}^N}\frac{v(x)- v(y)-\nabla v(x)\cdotp(x-y)\mathcal{X}_{|x-y|<\delta}(y)}{|x-y|^{N+2s}}\,dy,\quad\forall\,\delta>0,&\quad\hbox{if}\,\,1/2\leq s<1,\\
\end{array}
\right.
\end{eqnarray*}
where $p.v.$ stands for Cauchy's principal value, and $C_{N,s}:= \frac{s\,4^s \Gamma(\frac{N}{2}+s)}{\pi^{\frac{N}{2}}\Gamma(1-s)}$.
\end{definition}

We are rarely going to use the fractional Laplacian operator in the Schwartz space; it can be extended to less regular functions as follows. For $s \in (0,1)$, $\varepsilon>0$, let
\begin{eqnarray*}
L_{s,\varepsilon}(\Omega)&:=&\left\{\begin{array}{ll}
\displaystyle L_s(\mathbb{R}^N)\cap C^{0,2s+\varepsilon}(\Omega)&\quad\hbox{if}\,\,0<s<1/2,\\
{}\\
\displaystyle L_s(\mathbb{R}^N)\cap C^{1,2s+\varepsilon-1}(\Omega),&\quad\hbox{if}\,\,1/2\leq s<1,\\
\end{array}
\right.
\end{eqnarray*}
where $\Omega$ is an open subset of $\mathbb{R}^N$, $C^{0,2s+\varepsilon}(\Omega)$ is the space of $2s+\varepsilon$- H\"{o}lder continuous functions on $\Omega$,  $C^{1,2s+\varepsilon-1}(\Omega)$ the space of functions of $C^1(\Omega)$ whose first partial
derivatives are H\"{o}lder continuous with exponent $2s+\varepsilon-1$, and
$$L_s(\mathbb{R}^N)=\left\{u:\mathbb{R}^N\rightarrow\mathbb{R}\quad\hbox{such that}\quad \int_{\mathbb{R}^N}\frac{u(x)}{1+|x|^{N+2s}}\,dx<\infty\right\}.$$

\begin{proposition}\label{Frac}\cite[Proposition~2.4]{Silvestre}${}$\\
Let $\Omega$ be an open subset of $\mathbb{R}^N$, $s \in (0,1)$, and $f\in L_{s,\varepsilon}(\Omega)$ for some $\varepsilon>0$. Then $(-\Delta)^sf$ is a continuous function in $\Omega$ and $(-\Delta)^sf(x)$ is given by the pointwise formulas of Definition \ref{def1} for every $x\in\Omega$.
\end{proposition}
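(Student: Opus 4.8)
The plan is to fix $x_0\in\Omega$, choose $r>0$ with $\overline{B_{2r}(x_0)}\subset\Omega$, and for $x\in B_r(x_0)$ split the integral of Definition \ref{def1} into a \emph{near} part, integrated over $B_{2r}(x_0)$, and a \emph{far} part, integrated over $\mathbb R^N\setminus B_{2r}(x_0)$. The singularity of the kernel at $y=x$ will be controlled by the local Hölder / $C^1$-regularity built into the definition of $L_{s,\varepsilon}(\Omega)$, while the behaviour at infinity is controlled by the membership $f\in L_s(\mathbb R^N)$. Each of the two pieces will be shown to be finite and continuous in $x$, and summing them then proves the statement on a neighbourhood of $x_0$, hence on all of $\Omega$.

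For the far part, note that $|y-x_0|\ge 2r$ and $x\in B_r(x_0)$ force $|x-y|\ge\tfrac12|y-x_0|$, so the kernel is at most $2^{N+2s}|y-x_0|^{-(N+2s)}$. Since $f$ is continuous, hence bounded, on $\overline{B_r(x_0)}$, and since $f\in L_s(\mathbb R^N)$, the integrand $|f(x)-f(y)|\,|x-y|^{-(N+2s)}$ is dominated, uniformly for $x\in B_r(x_0)$, by the fixed integrable function $2^{N+2s}\big(\|f\|_{L^\infty(\overline{B_r(x_0)})}+|f(y)|\big)|y-x_0|^{-(N+2s)}$ on $\{|y-x_0|\ge 2r\}$. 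Absolute convergence of the far part follows, and its continuity in $x$ follows from the dominated convergence theorem.

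For the near part, fix $\delta\in(0,r)$ and split once more into the region $\{\delta\le|x-y|,\ y\in B_{2r}(x_0)\}$ — where the kernel is bounded by $\delta^{-(N+2s)}$ and $f$ is bounded, so this piece is manifestly a continuous function of $x$ — and the region $\{|x-y|<\delta\}$. On the latter I would pass to $z=y-x$ and symmetrise, using that the odd-in-$z$ term $\nabla f(x)\cdot z\,\mathcal{X}_{|z|<\delta}$ integrates to zero over balls: this reconciles the principal-value formula with the corrected one, shows independence of $\delta$, and rewrites the contribution as $\tfrac12\int_{|z|<\delta}\big(2f(x)-f(x+z)-f(x-z)\big)|z|^{-(N+2s)}\,dz$. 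If $0<s<1/2$, the bound $|f(x)-f(y)|\le[f]_{C^{0,2s+\varepsilon}(\overline{B_{2r}(x_0)})}|x-y|^{2s+\varepsilon}$ shows the integrand is $\le C|z|^{-N+\varepsilon}$; if $1/2\le s<1$, a first-order Taylor expansion with the $C^{1,2s+\varepsilon-1}$-bound gives $|2f(x)-f(x+z)-f(x-z)|\le C|z|^{2s+\varepsilon}$ and again a bound $C|z|^{-N+\varepsilon}$. In both cases $C$ is uniform for $x$ in a fixed small ball about $x_0$, so the integrand is dominated by the fixed integrable function $C|z|^{-N+\varepsilon}\mathcal{X}_{|z|<\delta}$; as it is continuous in $x$ for each fixed $z$, dominated convergence yields continuity.

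The only genuinely delicate point is the bookkeeping around $y=x$ when $1/2\le s<1$: one must verify that the principal value (equivalently, the corrected) integral is actually finite and that the gradient correction really cancels, which is precisely what the symmetrisation in $z$ makes transparent. Everything else — the tail estimate via $f\in L_s(\mathbb R^N)$ and the two dominated-convergence arguments — is routine.
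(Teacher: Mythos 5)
The paper does not prove this proposition at all: it is imported verbatim as \cite[Proposition~2.4]{Silvestre}, so there is no in-paper argument to compare against. Your proof is correct and is essentially the standard argument behind Silvestre's statement --- tail controlled by $f\in L_s(\mathbb{R}^N)$, the singularity at $y=x$ controlled by the local H\"older (resp.\ $C^{1,2s+\varepsilon-1}$) regularity after symmetrising in $z$ to cancel the gradient correction, and continuity in both pieces by dominated convergence.
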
 
\noindent{\bf Remark:} A simple sufficient condition for function $f$ to satisfy the conditions in Proposition \ref{Frac} is that $f\in  L^1_{loc}(\mathbb{R}^N)\cap C^{2}(\Omega)$.\\

\begin{lemma}\label{lemma3}\cite{Bonforte}
Let $\langle x\rangle:=(1+|x|^2)^{1/2}$, $x\in\mathbb{R}^N$, $s \in (0,1]$, $d\geq 1$, and $q_0>N$. Then 
$$\langle x\rangle^{-q_0}\in  L^\infty(\mathbb{R}^N)\cap C^{\infty}(\mathbb{R}^N),\qquad\partial_x^2\langle x\rangle^{-q_0}\in L^\infty(\mathbb{R}^N),$$
 and 
$$
\left|(-\Delta)^s\langle x\rangle^{-q_0}\right|\lesssim \langle x\rangle^{-N-2s}.
$$
\end{lemma}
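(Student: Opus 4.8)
\emph{Strategy.} The plan is to prove the three assertions separately: the $C^\infty$ and $L^\infty$ statements are elementary, and the pointwise bound $|(-\Delta)^s\langle x\rangle^{-q_0}|\lesssim\langle x\rangle^{-N-2s}$ is the only substantial point. Throughout write $v:=\langle x\rangle^{-q_0}$, and note that the parameter $d\ge1$ plays no role.

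\emph{Regularity, boundedness, and reduction to $|x|\ge1$.} Since $\langle x\rangle^2=1+|x|^2$ is a strictly positive $C^\infty$ function on $\mathbb R^N$, any real power of it is $C^\infty$, so $v\in C^\infty(\mathbb R^N)$; and $\langle x\rangle\ge1$ gives $0<v\le1$, hence $v\in L^\infty$. A direct computation gives $\partial_i v=-q_0x_i\langle x\rangle^{-q_0-2}$ and $\partial^2_{ij}v=-q_0\delta_{ij}\langle x\rangle^{-q_0-2}+q_0(q_0+2)x_ix_j\langle x\rangle^{-q_0-4}$, and using $|x_ix_j|\le|x|^2\le\langle x\rangle^2$ one gets $|\partial^2 v(x)|\lesssim\langle x\rangle^{-q_0-2}\le1$, so $\partial^2 v\in L^\infty(\mathbb R^N)$. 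Moreover $v\in L^1_{loc}(\mathbb R^N)\cap C^2(\mathbb R^N)$ and $v\in L_s(\mathbb R^N)$ trivially (since $\langle x\rangle^{-q_0}\le1$), so by the Remark following Proposition \ref{Frac} the function $(-\Delta)^s v$ is continuous on all of $\mathbb R^N$ and is given by the pointwise formulas of Definition \ref{def1}; in particular it is bounded on $\{|x|\le1\}$, on which $\langle x\rangle$ lies in $[1,\sqrt2]$. Hence it only remains to prove $|(-\Delta)^s v(x)|\lesssim|x|^{-N-2s}$ for $|x|\ge1$.

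\emph{The decay estimate.} For $s=1$ I would simply compute $-\Delta v=q_0N\langle x\rangle^{-q_0-2}-q_0(q_0+2)|x|^2\langle x\rangle^{-q_0-4}$; since $q_0>N$ and $|x|^2\le\langle x\rangle^2$, both terms are $\lesssim\langle x\rangle^{-q_0-2}\le\langle x\rangle^{-N-2}$. For $0<s<1$ I would work with the symmetric form $(-\Delta)^sv(x)=-\tfrac{C_{N,s}}{2}\int_{\mathbb R^N}\big(v(x+z)+v(x-z)-2v(x)\big)|z|^{-N-2s}\,dz$ and, for $|x|\ge1$, split the integral over $A:=\{|z|\le|x|/2\}$ and $A^c$. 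On $A$, Taylor's theorem gives $|v(x+z)+v(x-z)-2v(x)|\lesssim|z|^2\sup_{B(x,|z|)}|\partial^2 v|$, and for $\xi\in B(x,|x|/2)$ one has $\langle\xi\rangle\ge\langle x\rangle/2$, hence $|\partial^2 v(\xi)|\lesssim\langle x\rangle^{-q_0-2}$; thus the $A$-part is $\lesssim\langle x\rangle^{-q_0-2}\int_{|z|\le|x|/2}|z|^{2-N-2s}\,dz\lesssim\langle x\rangle^{-q_0-2}|x|^{2-2s}\lesssim\langle x\rangle^{-q_0-2s}\le\langle x\rangle^{-N-2s}$, the $z$-integral converging at the origin because $s<1$ and the last step using $q_0>N$. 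On $A^c$, estimate the three terms separately: $\int_{|z|>|x|/2}2v(x)|z|^{-N-2s}\,dz\lesssim v(x)|x|^{-2s}\lesssim\langle x\rangle^{-q_0-2s}$, and $\int_{|z|>|x|/2}v(x\pm z)|z|^{-N-2s}\,dz\le(2/|x|)^{N+2s}\|v\|_{L^1(\mathbb R^N)}\lesssim|x|^{-N-2s}$, where $v\in L^1(\mathbb R^N)$ exactly because $q_0>N$. Adding the two contributions gives $|(-\Delta)^sv(x)|\lesssim|x|^{-N-2s}$ for $|x|\ge1$, which together with the boundedness near the origin yields $|(-\Delta)^sv|\lesssim\langle x\rangle^{-N-2s}$ on $\mathbb R^N$.

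\emph{Main obstacle.} The only genuinely delicate point is the near-diagonal region when $s\ge1/2$: there a first-order expansion of $v$ does not make $\int_A|z|^{1-N-2s}\,dz$ convergent, which is why one must use the symmetric second difference (equivalently, the principal-value formula of Definition \ref{def1} with its gradient correction). Everything else is bookkeeping between $\langle x\rangle$ and $|x|$, the one structural input being that $q_0>N$ forces $v\in L^1(\mathbb R^N)$, so that the far-field term contributes the decay $\|v\|_{L^1}\,|x|^{-N-2s}$ — precisely the claimed rate, and in general the dominant contribution.
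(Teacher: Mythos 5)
Your proof is correct. Note that the paper itself offers no argument for this lemma: it is quoted from the reference \cite{Bonforte}, so there is no internal proof to compare against. Your computation of $\partial^2\langle x\rangle^{-q_0}$, the reduction to $|x|\ge 1$ via continuity of $(-\Delta)^s v$ (Proposition \ref{Frac}), and the splitting of the symmetric second-difference representation at $|z|=|x|/2$ — with the near region controlled by $\sup_{B(x,|x|/2)}|\partial^2 v|\lesssim\langle x\rangle^{-q_0-2}$ and the far region by $\|v\|_{L^1}\,|x|^{-N-2s}$ (using $q_0>N$) — is exactly the standard route to such kernel-decay estimates and matches the argument in the cited source. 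The two points worth having made explicit are made: the symmetric form absorbs the gradient correction in Definition \ref{def1} for $s\ge 1/2$ by antisymmetry, and the hypothesis $q_0>N$ enters precisely to make $v$ integrable so that the far-field term produces the claimed rate $\langle x\rangle^{-N-2s}$, which dominates the $\langle x\rangle^{-q_0-2s}$ contributions. You also correctly observe that the parameter $d\ge 1$ in the statement is vestigial. I see no gap.
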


\begin{lemma}\label{lemma4}
Let $\psi$ be a smooth function satisfying $\partial_x^2\psi\in L^\infty(\mathbb{R}^N)$. For any $R>0$, let $\psi_R$ be a function defined by
$$ \psi_R(x):= \psi(R^{-1} x) \quad \text{ for all } x \in \mathbb{R}^N.$$
Then, $(-\Delta)^s (\psi_R)$,  $s \in (0,1]$,  satisfies the following scaling properties:
$$(-\Delta)^s \psi_R(x)= R^{-2s}(-\Delta)^s\psi(R^{-1} x), \quad \text{ for all } x \in \mathbb{R}^N. $$
\end{lemma}

\begin{lemma}\label{lemma5} Let $R>0$, $p>1$, $0<\alpha <2$, $N\geq1$, and $N<q_0<N+\alpha p$. Then, the following estimate holds
\begin{equation}\label{10}
\int_{\mathbb{R}^N}(\Phi_R(x))^{-1/(p-1)}\,\big|\mathcal{L}\Phi_R(x)\big|^{p/(p-1)}\, dx\lesssim R^{-\frac{2 p}{p-1}+N}+R^{-\frac{\alpha p}{p-1}+N},
\end{equation}
where $\Phi_R(x)=\langle {x}/{R}\rangle^{-q_0}=(1+|x/R|^2)^{-q_0/2}$.
\end{lemma}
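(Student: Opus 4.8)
The plan is to rescale the integral to an \(R\)-independent one and then read off convergence from the hypothesis \(N<q_0<N+\alpha p\). First I would write \(\Phi_R(x)=\Phi(x/R)\) with \(\Phi(x):=\langle x\rangle^{-q_0}\). Since \(q_0>N\), Lemma \ref{lemma3} applies to \(\Phi\) with \(s=1\) and with \(s=\alpha/2\in(0,1)\), yielding the pointwise bounds \(|(-\Delta)\Phi(y)|\lesssim\langle y\rangle^{-N-2}\) and \(|(-\Delta)^{\alpha/2}\Phi(y)|\lesssim\langle y\rangle^{-N-\alpha}\). Next, by Lemma \ref{lemma4} applied to both \(-\Delta\) and \((-\Delta)^{\alpha/2}\),
\[
\mathcal{L}\Phi_R(x)=R^{-2}\,(-\Delta)\Phi(x/R)+R^{-\alpha}\,(-\Delta)^{\alpha/2}\Phi(x/R),
\]
so with \(\theta:=p/(p-1)>1\) and the elementary bound \((a+b)^\theta\le 2^{\theta-1}(a^\theta+b^\theta)\) for \(a,b\ge0\),
\[
\big|\mathcal{L}\Phi_R(x)\big|^{\theta}\lesssim R^{-2\theta}\,\big|(-\Delta)\Phi(x/R)\big|^{\theta}+R^{-\alpha\theta}\,\big|(-\Delta)^{\alpha/2}\Phi(x/R)\big|^{\theta}.
\]

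Then I would change variables \(x=Ry\), \(dx=R^N\,dy\), and use \(\Phi_R(x)^{-1/(p-1)}=\langle y\rangle^{q_0/(p-1)}\) to obtain
\[
\int_{\mathbb{R}^N}\Phi_R^{-1/(p-1)}\big|\mathcal{L}\Phi_R\big|^{\theta}\,dx\lesssim R^{N-2\theta}\,I_1+R^{N-\alpha\theta}\,I_2,
\]
where \(I_1:=\int_{\mathbb{R}^N}\langle y\rangle^{q_0/(p-1)}|(-\Delta)\Phi(y)|^{\theta}\,dy\) and \(I_2\) is defined analogously with \((-\Delta)^{\alpha/2}\Phi\), both independent of \(R\). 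Since \(N-2\theta=-\tfrac{2p}{p-1}+N\) and \(N-\alpha\theta=-\tfrac{\alpha p}{p-1}+N\), it remains only to show \(I_1,I_2<\infty\). Inserting the pointwise bounds, the integrand of \(I_2\) is \(\lesssim\langle y\rangle^{(q_0-(N+\alpha)p)/(p-1)}\), which lies in \(L^1(\mathbb{R}^N)\) precisely when \((q_0-(N+\alpha)p)/(p-1)<-N\), i.e.\ when \(q_0<N+\alpha p\); likewise the integrand of \(I_1\) is \(\lesssim\langle y\rangle^{(q_0-(N+2)p)/(p-1)}\), integrable when \(q_0<N+2p\), which follows from \(q_0<N+\alpha p<N+2p\) since \(\alpha<2\). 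There is no issue near the origin, where \(\Phi\) is smooth and \((-\Delta)\Phi\), \((-\Delta)^{\alpha/2}\Phi\) are continuous, hence locally bounded. This establishes \eqref{10}.

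The argument is computationally routine; the only delicate point is the bookkeeping of powers, where one sees that the borderline behavior is dictated by the nonlocal term and that the assumed window \(N<q_0<N+\alpha p\) is exactly what is required — the lower bound \(q_0>N\) so that Lemma \ref{lemma3} is applicable, and the upper bound \(q_0<N+\alpha p\) so that \(I_2\) converges. I do not anticipate a genuine obstacle beyond this.
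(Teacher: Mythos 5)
Your argument is correct and follows essentially the same route as the paper's proof: rescale via Lemma \ref{lemma4}, invoke the pointwise decay bounds of Lemma \ref{lemma3}, split the $\mathcal{L}$-term into its local and nonlocal parts, change variables $x=Ry$, and verify integrability from $q_0<N+\alpha p<N+2p$. The only differences are cosmetic — you make the elementary inequality $(a+b)^\theta\lesssim a^\theta+b^\theta$ and the behavior near the origin explicit, which the paper leaves implicit.
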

\begin{proof}  Let $\tilde{x}=x/R$; by Lemma \ref{lemma4} we have $(-\Delta)^s\Phi_R(x)=R^{-2s}(-\Delta)^s\Phi_R(\tilde{x})$, for $s\in\{1,\alpha/2\}$. Therefore, using Lemma \ref{lemma3}, we conclude that
\begin{eqnarray*}
&{}&\int_{\mathbb{R}^N}(\Phi_R(x))^{-1/(p-1)}\,\big|\mathcal{L} \Phi_R(x)\big|^{p/(p-1)}\, dx\\
&{}&\lesssim\int_{\mathbb{R}^N}(\Phi_R(x))^{-1/(p-1)}\,\big|(-\Delta) \Phi_R(x)\big|^{p/(p-1)}\, dx+\int_{\mathbb{R}^N}(\Phi_R(x))^{-1/(p-1)}\,\big|(-\Delta)^{\alpha/2} \Phi_R(x)\big|^{p/(p-1)}\, dx\\
&{}&\lesssim R^{-\frac{2p}{p-1}+N}\int_{\mathbb{R}^N}\langle \tilde{x}\rangle^{\frac{q_0}{p-1}-\frac{(N+2)p}{p-1}}\, d \tilde{x}+R^{-\frac{\alpha p}{p-1}+N}\int_{\mathbb{R}^N}\langle \tilde{x}\rangle^{\frac{q_0}{p-1}-\frac{(N+\alpha)p}{p-1}}\, d \tilde{x}\\
&{}&\lesssim R^{-\frac{2p}{p-1}+N}+R^{-\frac{\alpha p}{p-1}+N},
\end{eqnarray*}
where we have used the fact that $q_0<N+\alpha p<N+2p$ implies $\frac{(N+2)p}{p-1}-\frac{q_0}{p-1}>N$ and $\frac{(N+\alpha)p}{p-1}-\frac{q_0}{p-1}>N$.
\end{proof}

In order to introduce the notion of  the mild solution of our problem, and to prove the main results especially Theorem \ref{decay}, we need to give a full review about the heat semigroup and the associated heat kernel of  the operator $-\mathcal{L}=\Delta-(-\Delta)^{\alpha/2}$, $\alpha\in (0,2)$ (see e.g. \cite{Song}). The comparison principle is needed as well.\\
We start by an alternative expression of the fractional Laplacian operator $(-\Delta)^{\alpha/2}$ via the Fourier transform 
$$
\begin{array}{llll}
(-\Delta)^{\alpha/2}:&H^\alpha(\mathbb{R}^N)\subseteq L^2(\mathbb{R}^N)&\longrightarrow& L^2(\mathbb{R}^N)\\
{}&\qquad \quad u&\longmapsto& (-\Delta)^{\alpha/2}(u)=\mathcal{F}^{-1}(|\xi|^\alpha\mathcal{F}(u))\\
\end{array}
$$
where 
$$H^\alpha(\mathbb{R}^N)=D\left((-\Delta)^{\alpha/2}\right)=\{u\in L^2(\mathbb{R}^N);\,\, |\xi|^\alpha\mathcal{F}(u)\in L^2(\mathbb{R}^N)\},\qquad s>0.$$
 As a consequence of the fact that $(-\Delta)^{\alpha/2}$ is a positive definite self-adjoint operator on the Hilbert space $L^2(\mathbb{R}^N)$, is that the operator $-(-\Delta)^{\alpha/2}$, e.g. Yosida \cite{Yosi} or \cite[Theorem~4.9]{Grigoryan}, generates a strongly continuous semigroup $T(t):=e^{-t(-\Delta)^{\alpha/2}}$ on $L^2(\mathbb{R}^N)$. It holds $T(t)v=P_\alpha(t)\ast v$, where $P_\alpha$ is
the fundamental solution of the fractional diffusion equation $u_t+(-\Delta)^{\alpha/2}u=0$, represented via the Fourier transform by
\begin{equation}\label{FS}
P_\alpha(t)(x):=P_\alpha(x,t)=\frac{1}{(2\pi)^{N/2}}\int_{\mathbb{R}^N}e^{ix.\xi-t|\xi|^\alpha}\,d\xi.
\end{equation}
It is well-known that this function, also known  as the heat kernel of $-(-\Delta)^{\alpha/2}$, satisfies
\begin{equation}\label{P_1}
    P_\alpha(1)\in L^\infty(\mathbb{R}^N)\cap
L^1(\mathbb{R}^N),\quad
P_\alpha(x,t)\geq0,\quad\int_{\mathbb{R}^N}P_\alpha(x,t)\,dx=1,
\end{equation}
\noindent for all $x\in\mathbb{R}^N$ and $t>0.$ Hence, using Young's inequality for the convolution
and the following self-similar form $P_\alpha(x,t)=t^{-N/\alpha}P_\alpha(xt^{-1/\alpha},1)$, see e.g. \cite{Miao}, we have
\begin{equation}\label{P_2}
\|P_\alpha(t)\ast v\|_q\;\leq \;Ct^{-\frac{N}{\alpha}(\frac{1}{r}-\frac{1}{q})}\|v\|_r,
\end{equation}
\begin{equation}\label{P_3}
\|\nabla P_\alpha(t)\|_q\;\leq \;Ct^{-\frac{N}{\alpha}(1-\frac{1}{q})-\frac{1}{\alpha}},
\end{equation}
for all $v\in L^r(\mathbb{R}^N)$ and  all $1\leq r\leq q\leq\infty,$ $t>0$.\\
Using $H^2(\mathbb{R}^N)\subseteq H^\alpha(\mathbb{R}^N)$, one can define the mixed local-nonlocal operator by
$$
\begin{array}{llll}
\mathcal{L}:&H^2(\mathbb{R}^N)\subseteq L^2(\mathbb{R}^N)&\longrightarrow& L^2(\mathbb{R}^N)\\
{}&\qquad \quad u&\longmapsto& \mathcal{L}(u)=-\Delta u+(-\Delta)^{\alpha/2}(u)\\
\end{array}
$$
which, naturally, implies that the operator $-\mathcal{L}$ generates a strongly continuous semigroup of contractions $\{S(t):=e^{-t\mathcal{L}}\}_{t\geq 0}$ on $L^2(\mathbb{R}^N)$. It holds $S(t)v=E_\alpha(t)\ast v$, where $E_\alpha(t)=P_2(t)\ast P_\alpha(t)$ is the heat kernel of the operator $-\mathcal{L}$, and $P_2(t)$ is the fundamental solution of the heat  equation $u_t-\Delta u=0$, represented by
$$P_2(t)=P_2(t,x)=\frac{1}{(4\pi t)^{N/2}}e^{-\frac{|x|^2}{4t}}.$$
Note that,  see \cite[Proposition~48.4]{souplet}, the function $P_2(t)$ is the heat kernel of $\Delta$ and satisfies the properties \eqref{P_1}, \eqref{P_2}, and \eqref{P_3} by replacing $\alpha$ by $2$.\\
It is clear that $E_\alpha(t)$ is the fundamental solution of the diffusion equation $u_t + \mathcal{L}u=0$, and satisfies the following properties.

\begin{lemma}\label{Property1}
The heat kernel $E_\alpha(t)$ satisfies the following properties
\begin{enumerate}
  \item[$(i)$] $E_\alpha \in C^{\infty}(\mathbb{R}^+\times\mathbb{R}^N)$ and $E_\alpha(t)\geq 0$, for all $x\in\mathbb{R}^N$ and $t>0$,
  \item[$(ii)$] For all $t>0$, we have 
  $$\int_{\mathbb{R}^N}E_\alpha(t,x)dx=1.$$
   \item[$(iii)$] For every $t,\tau>0$, we have 
  $$E_\alpha(t)\ast E_\alpha(\tau)=E_\alpha(t+\tau).$$
  \end{enumerate}
\end{lemma}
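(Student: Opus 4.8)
The plan is to read off all three statements from the representation $E_\alpha(t)=P_2(t)\ast P_\alpha(t)$ together with the properties of $P_2(t)$ and $P_\alpha(t)$ collected above, and from the Fourier representation of $E_\alpha$. For $(i)$, nonnegativity is immediate: since $P_2(t)\geq 0$ and $P_\alpha(t)\geq 0$ for every $t>0$ (by \eqref{P_1} and its counterpart with $\alpha$ replaced by $2$), their convolution is nonnegative. For the $C^\infty$ regularity on $\mathbb{R}^+\times\mathbb{R}^N$, I would start from the Fourier-side expression
$$E_\alpha(x,t)=C\int_{\mathbb{R}^N}e^{ix\cdot\xi-t(|\xi|^2+|\xi|^\alpha)}\,d\xi,$$
obtained by convolving \eqref{FS} with the analogous formula for $P_2$. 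For any multi-index in the $x$-variable and any order of $t$-derivative, the formally differentiated integrand is dominated in absolute value by $C_m\,(1+|\xi|)^{m}\,e^{-t_0|\xi|^2}$, uniformly for $(x,t)$ with $t$ in a fixed compact subinterval $[t_0,t_1]\subset(0,\infty)$; this bound is integrable in $\xi$, so repeated differentiation under the integral sign is justified and produces continuous functions. Hence $E_\alpha\in C^\infty(\mathbb{R}^+\times\mathbb{R}^N)$.

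Statement $(ii)$ is a direct application of Tonelli's theorem, the integrand being nonnegative:
$$\int_{\mathbb{R}^N}E_\alpha(x,t)\,dx=\int_{\mathbb{R}^N}\int_{\mathbb{R}^N}P_2(x-y,t)\,P_\alpha(y,t)\,dy\,dx=\int_{\mathbb{R}^N}P_\alpha(y,t)\Big(\int_{\mathbb{R}^N}P_2(x-y,t)\,dx\Big)dy=\int_{\mathbb{R}^N}P_\alpha(y,t)\,dy=1,$$
where the inner and outer unit-mass identities are instances of \eqref{P_1} (applied to $P_2$, i.e. with $\alpha=2$, and to $P_\alpha$, respectively).

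For $(iii)$ I would invoke the semigroup property $S(t)S(\tau)=S(t+\tau)$ of $-\mathcal{L}$ recalled above: applying both sides to an arbitrary $v\in L^2(\mathbb{R}^N)$ and using associativity of convolution (legitimate since all kernels lie in $L^1(\mathbb{R}^N)$ by \eqref{P_1}) gives $\big(E_\alpha(t)\ast E_\alpha(\tau)\big)\ast v=E_\alpha(t+\tau)\ast v$ for all such $v$, which forces $E_\alpha(t)\ast E_\alpha(\tau)=E_\alpha(t+\tau)$ (for instance by passing to Fourier transforms). Equivalently, and more explicitly, one can split $E_\alpha(t)\ast E_\alpha(\tau)=\big(P_2(t)\ast P_2(\tau)\big)\ast\big(P_\alpha(t)\ast P_\alpha(\tau)\big)$ by commutativity and associativity of convolution, and then use the Chapman--Kolmogorov identities $P_2(t)\ast P_2(\tau)=P_2(t+\tau)$ and $P_\alpha(t)\ast P_\alpha(\tau)=P_\alpha(t+\tau)$ (each following from $e^{-t|\xi|^2}e^{-\tau|\xi|^2}=e^{-(t+\tau)|\xi|^2}$ and $e^{-t|\xi|^\alpha}e^{-\tau|\xi|^\alpha}=e^{-(t+\tau)|\xi|^\alpha}$ on the Fourier side) to conclude $E_\alpha(t)\ast E_\alpha(\tau)=P_2(t+\tau)\ast P_\alpha(t+\tau)=E_\alpha(t+\tau)$.

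The only point requiring genuine care is the joint $C^\infty$ regularity in $(i)$: one must check that the differentiated integrands in the Fourier representation are dominated, uniformly on compact $t$-intervals, by a fixed integrable function of $\xi$ — the polynomial growth produced by differentiating the phase $e^{ix\cdot\xi}$ being absorbed by the Gaussian decay $e^{-t|\xi|^2}$ (the $|\xi|^\alpha$ term being irrelevant for large $|\xi|$ and harmless near the origin). Everything else reduces to Fubini/Tonelli and to the semigroup structure already established for $P_2$ and $P_\alpha$.
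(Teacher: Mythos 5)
Your proof is correct; the paper itself offers no proof of this lemma (it is stated as evident from the factorization $E_\alpha(t)=P_2(t)\ast P_\alpha(t)$), and your argument supplies exactly the routine verification that the paper leaves implicit: nonnegativity and unit mass from the corresponding properties of $P_2$ and $P_\alpha$ via Tonelli, smoothness by differentiating the Fourier representation under the integral sign with the Gaussian factor providing domination, and the semigroup law from commutativity/associativity of convolution together with the Chapman--Kolmogorov identities for $P_2$ and $P_\alpha$ separately. This matches the approach the paper relies on throughout (e.g.\ in the proof of Lemma \ref{Property2}), so no further comment is needed.
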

\begin{lemma}\label{Property2}
The heat kernel $E_\alpha(t)$ satisfies the following properties
\begin{itemize}
   \item For every $v\in L^r(\mathbb{R}^N)$ and  all $1\leq r\leq q\leq\infty$, $t>0$, we have
  \begin{equation}\label{Pr1}
  \|E_\alpha(t)\ast v\|_q\,\leq \;C\min\left\{t^{-\frac{N}{2}(\frac{1}{r}-\frac{1}{q})},\,t^{-\frac{N}{\alpha}(\frac{1}{r}-\frac{1}{q})}\right\}\|v\|_r.
\end{equation}
   \item For all $1\leq q\leq \infty$, we have 
    \begin{equation}\label{Pr2}
    \|\nabla E_\alpha(t)\|_q\,\leq \;C\min\left\{t^{-\frac{N}{2}(1-\frac{1}{q})-\frac{1}{2}},\,t^{-\frac{N}{\alpha}(1-\frac{1}{q})-\frac{1}{\alpha}}\right\}.
  \end{equation}
  \end{itemize}
\end{lemma}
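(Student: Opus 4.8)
The plan is to exploit the factorization $E_\alpha(t)=P_2(t)\ast P_\alpha(t)$ together with the fact that \emph{both} $P_2(t)$ and $P_\alpha(t)$ are nonnegative and integrate to $1$ over $\mathbb{R}^N$ (so each has unit $L^1$-norm, by \eqref{P_1} and its analogue for $P_2$), and to push the already-known heat-kernel bounds \eqref{P_2}–\eqref{P_3} for $P_\alpha$ — and their $\alpha\mapsto 2$ analogues for $P_2$, valid by \cite[Proposition~48.4]{souplet} — through Young's convolution inequality $\|f\ast g\|_q\le \|f\|_1\|g\|_q$. The point is that either kernel can play the role of the ``$L^1$ factor'', which is exactly what produces the minimum of two rates.

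For \eqref{Pr1}, I would write, for $v\in L^r(\mathbb{R}^N)$,
$$E_\alpha(t)\ast v = P_2(t)\ast\big(P_\alpha(t)\ast v\big)=P_\alpha(t)\ast\big(P_2(t)\ast v\big),$$
using associativity and commutativity of convolution. Applying Young's inequality to the first expression with $\|P_2(t)\|_1=1$ and then \eqref{P_2} gives $\|E_\alpha(t)\ast v\|_q\le C t^{-\frac{N}{\alpha}(\frac1r-\frac1q)}\|v\|_r$; applying it to the second expression with $\|P_\alpha(t)\|_1=1$ and the analogue of \eqref{P_2} for $P_2$ gives $\|E_\alpha(t)\ast v\|_q\le C t^{-\frac{N}{2}(\frac1r-\frac1q)}\|v\|_r$. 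Taking the smaller of the two bounds yields \eqref{Pr1}. For \eqref{Pr2}, since $P_2(t)\in\mathcal{S}(\mathbb{R}^N)$ one may differentiate under the convolution integral, so
$$\nabla E_\alpha(t)=(\nabla P_2(t))\ast P_\alpha(t)=P_2(t)\ast(\nabla P_\alpha(t)).$$
Young's inequality on the first form (with $\|P_\alpha(t)\|_1=1$ and the analogue of \eqref{P_3} for $P_2$) gives $\|\nabla E_\alpha(t)\|_q\le C t^{-\frac{N}{2}(1-\frac1q)-\frac12}$, and on the second form (with $\|P_2(t)\|_1=1$ and \eqref{P_3}) gives $\|\nabla E_\alpha(t)\|_q\le C t^{-\frac{N}{\alpha}(1-\frac1q)-\frac1\alpha}$; the minimum of these is \eqref{Pr2}.

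There is no real obstacle here: the only points needing a word of justification are that $\nabla$ commutes with the convolution (immediate from $P_2(t)\in\mathcal{S}(\mathbb{R}^N)$, or alternatively from $P_\alpha(t)$ being Schwartz-like with rapid decay) and the bookkeeping of which factor supplies the $L^1$-bound and which supplies the $L^q$-bound in each application of Young's inequality. Everything else is a direct consequence of \eqref{P_1}–\eqref{P_3} and the remark that $P_2$ enjoys the same estimates with $\alpha$ replaced by $2$.
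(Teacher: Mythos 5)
Your proposal is correct and follows essentially the same route as the paper: factor $E_\alpha(t)=P_2(t)\ast P_\alpha(t)$, use Young's inequality with one kernel supplying the unit $L^1$-norm and the other supplying the known $L^r$--$L^q$ (resp.\ gradient) decay, and take the minimum of the two resulting rates. The only cosmetic difference is which factor you let play the smoothing role in each of the two estimates, which does not affect the outcome.
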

\begin{proof} For every $1\leq r\leq q\leq\infty$ and $t>0$, using Young's inequality for the convolution and the fact that $\|P_i(t)\|_1=1$, $i=2,\alpha$, we have
\begin{eqnarray}\label{Pr4}
\|E_\alpha(t) \ast v\|_q&=&\|P_2(t) \ast \left(P_\alpha(t)\ast v\right)\|_q\nonumber\\
&\leq&C\,t^{-\frac{N}{2}(\frac{1}{r}-\frac{1}{q})}\|P_\alpha(t)\ast v\|_r\nonumber\\
&\leq&C\,t^{-\frac{N}{2}(\frac{1}{r}-\frac{1}{q})}\|P_\alpha(t)\|_1\|v\|_r\nonumber\\
&=&C\,t^{-\frac{N}{2}(\frac{1}{r}-\frac{1}{q})}\|v\|_r,
\end{eqnarray}           
  and
   \begin{eqnarray}\label{Pr5}
\|E_\alpha(t) \ast v\|_q&=&\|P_\alpha(t) \ast \left(P_2(t)\ast v\right)\|_q\nonumber\\
&\leq&C\,t^{-\frac{N}{\alpha}(\frac{1}{r}-\frac{1}{q})}\|P_2(t)\ast v\|_r\nonumber\\
&\leq&C\,t^{-\frac{N}{\alpha}(\frac{1}{r}-\frac{1}{q})}\|P_2(t)\|_1\|v\|_r\nonumber\\
&=&C\,t^{-\frac{N}{\alpha}(\frac{1}{r}-\frac{1}{q})}\|v\|_r.
\end{eqnarray}                
Combining \eqref{Pr4} and \eqref{Pr5}, \eqref{Pr1} is obtained. Similarly, to get \eqref{Pr2}, we have
$$
\|\nabla E_\alpha(t)\|_q=\|\nabla(P_2(t)) \ast P_\alpha(t)\|_q\leq\|\nabla(P_2(t))\|_q \|P_\alpha(t)\|_1\leq C\,t^{-\frac{N}{2}(1-\frac{1}{q})-\frac{1}{2}},
$$            
  and
  $$
\|\nabla E_\alpha(t)\|_q=\|\nabla(P_\alpha(t)) \ast P_2(t)\|_q\leq\|\nabla(P_\alpha(t))\|_q \|P_2(t)\|_1\leq C\,t^{-\frac{N}{\alpha}(1-\frac{1}{q})-\frac{1}{\alpha}}.
$$
\end{proof}
\begin{lemma}\label{Taylor}
Let $g\in L^1(\mathbb{R}^N)$ and put $\displaystyle M_g=\int_{\mathbb{R}^N}g(x)\,dx$. We have 
\begin{equation}\label{TaylorInequality1}
\lim\limits_{t\to\infty}\|E_\alpha(t)\ast g-M_gE_\alpha(t)\|_1=0.
\end{equation}
If, in addition, $xg(x) \in L^1(\mathbb{R}^N)$, then
\begin{equation}\label{TaylorInequality2}
\|E_\alpha(t)\ast g-M_gE_\alpha(t)\|_1\leq C\min\{t^{-1/2},\,t^{-1/\alpha}\}\|xg(x)\|_1,\qquad\hbox{for all}\,\,t>0.
\end{equation}
\end{lemma}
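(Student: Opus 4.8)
The plan is to prove both assertions of Lemma \ref{Taylor} by exploiting the factorization $E_\alpha(t)=P_2(t)\ast P_\alpha(t)$ together with the analogous decay estimates already known for the purely local kernel $P_2$ and the purely nonlocal kernel $P_\alpha$. Indeed, the estimate \eqref{TaylorInequality1} is a classical fact for $P_2$ (see e.g. \cite[Lemma~3.1]{souplet} or Duoandikoetxea--Zuazua type arguments): for $g\in L^1(\mathbb{R}^N)$,
$$\lim_{t\to\infty}\|P_2(t)\ast g-M_g\,P_2(t)\|_1=0,$$
and, when in addition $xg\in L^1$, $\|P_2(t)\ast g-M_g\,P_2(t)\|_1\le C\,t^{-1/2}\|xg\|_1$. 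The same statements hold for $P_\alpha$ with exponent $t^{-1/\alpha}$; for the decay rate one uses the self-similar form $P_\alpha(x,t)=t^{-N/\alpha}P_\alpha(xt^{-1/\alpha},1)$, writes $P_\alpha(t)\ast g - M_g P_\alpha(t) = \int_{\mathbb{R}^N}(P_\alpha(x-y,t)-P_\alpha(x,t))g(y)\,dy$, expands by the fundamental theorem of calculus $P_\alpha(x-y,t)-P_\alpha(x,t)=-\int_0^1 y\cdot\nabla P_\alpha(x-\theta y,t)\,d\theta$, and integrates in $x$ using \eqref{P_3}, which gives $\|\nabla P_\alpha(t)\|_1\le C t^{-1/\alpha}$.

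From these two one-parameter facts I would deduce the mixed statement as follows. For \eqref{TaylorInequality1}, write
$$E_\alpha(t)\ast g - M_g E_\alpha(t) = P_\alpha(t)\ast\bigl(P_2(t)\ast g - M_g P_2(t)\bigr),$$
and apply Young's inequality together with $\|P_\alpha(t)\|_1=1$ to get
$$\|E_\alpha(t)\ast g - M_g E_\alpha(t)\|_1 \le \|P_2(t)\ast g - M_g P_2(t)\|_1 \xrightarrow[t\to\infty]{} 0.$$
(Symmetrically one could factor through $P_\alpha$ instead.) For \eqref{TaylorInequality2}, the same convolution identity plus $\|P_\alpha(t)\|_1=1$ and the decay bound for $P_2$ yields $\|E_\alpha(t)\ast g - M_g E_\alpha(t)\|_1\le C\,t^{-1/2}\|xg\|_1$; factoring the other way, $E_\alpha(t)\ast g - M_g E_\alpha(t) = P_2(t)\ast(P_\alpha(t)\ast g - M_g P_\alpha(t))$ and the same argument with $\|P_2(t)\|_1=1$ gives $\le C\,t^{-1/\alpha}\|xg\|_1$. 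Taking the minimum of the two bounds produces exactly $C\min\{t^{-1/2},t^{-1/\alpha}\}\|xg\|_1$, which is the claimed inequality.

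The only genuinely non-trivial input is the single-kernel Taylor-type estimate, and among those the density/approximation argument behind \eqref{TaylorInequality1} for general $L^1$ data (without the moment condition): one proves it first for $g$ with $xg\in L^1$ via the gradient bound, and then for arbitrary $g\in L^1$ by approximating in $L^1$ by such functions and using that both $g\mapsto P_i(t)\ast g$ and $g\mapsto M_g$ are uniformly bounded (in $t$) from $L^1$ to $L^1$, so the error splits into a small fixed piece plus a term that tends to $0$ for the approximant. I expect this approximation step to be the main obstacle in the sense of being the only part requiring care; everything downstream — the factorization, Young's inequality, and taking the minimum — is a short routine computation. If the single-kernel statements are taken as known from the cited literature, the proof reduces to a few lines.
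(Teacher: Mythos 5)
Your argument is correct, and it reaches the same conclusion by a slightly different route. The paper does not factor through the single-kernel statements at all: it applies the Taylor expansion $E_\alpha(t,x-y)-E_\alpha(t,x)=-\int_0^1 y\cdot\nabla E_\alpha(t,x-\theta y)\,d\theta$ \emph{directly} to the mixed kernel, integrates in $x$ via Minkowski's inequality, and invokes the already-proved gradient bound \eqref{Pr2}, $\|\nabla E_\alpha(t)\|_1\leq C\min\{t^{-1/2},t^{-1/\alpha}\}$, which delivers \eqref{TaylorInequality2} in one stroke with the minimum built in; the general $L^1$ case then follows by exactly the density argument you describe (approximation by $g_j\in\mathcal{D}(\mathbb{R}^N)$, using $\|E_\alpha(t)\|_1=1$ to control the two error terms uniformly in $t$). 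Your version instead pushes the Taylor step down to $P_2$ and $P_\alpha$ separately and transfers it to $E_\alpha$ via the identity $E_\alpha(t)\ast g-M_gE_\alpha(t)=P_\alpha(t)\ast\bigl(P_2(t)\ast g-M_gP_2(t)\bigr)$ (and its symmetric counterpart) together with Young's inequality and $\|P_i(t)\|_1=1$; this is sound, and it buys the ability to quote the classical heat-kernel result off the shelf, but note that the factorization trick is already baked into the paper's proof of \eqref{Pr2} (where $\nabla E_\alpha=\nabla P_2\ast P_\alpha=\nabla P_\alpha\ast P_2$), so the two proofs rest on identical mechanisms — Taylor expansion, an $L^1$ gradient bound, and $L^1$ density — with the convolution splitting merely performed at a different stage. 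One caveat: since you lean on the single-kernel statements as ``known,'' you should either supply the fractional case ($P_\alpha$, which is less standard than $P_2$ and requires the self-similar form plus \eqref{P_3} exactly as you sketch) or simply run your own sketch for $E_\alpha$ directly, which is what the paper does.
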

\begin{proof}
We adapt the technique used in \cite[Proposition~48.6]{souplet}. We first establish \eqref{TaylorInequality2} by supposing $g\in L^1(\mathbb{R}^N,\,(1+|x|)\,dx)$. Using Taylor's expansion and Fubini's theorem, we have
\begin{eqnarray*}
\|E_\alpha(t)\ast g-M_gE_\alpha(t)\|_1&=&\left\|\int_{\mathbb{R}^N}\left(E_\alpha(t,x-y)-E_\alpha(t,x)\right)g(y)\,dy\right\|_1\\
&=&\left\|\int_0^1\int_{\mathbb{R}^N}\nabla E_\alpha(t,x-\theta y) yg(y)\,dy\,d\theta\right\|_1\\
&\leq&\int_0^1\int_{\mathbb{R}^N}\left\|\nabla E_\alpha(t,x-\theta y)\right\|_1 yg(y)\,dy\,d\theta\\
&\leq&C\min\left\{t^{-\frac{1}{2}},\,t^{-\frac{1}{\alpha}}\right\}\int_0^1\int_{\mathbb{R}^N}yg(y)\,dy\,d\theta\\
&\leq&C\min\{t^{-1/2},\,t^{-1/\alpha}\}\|xg(x)\|_1,
\end{eqnarray*}
where we have use Minkowski's inequality and \eqref{Pr2}. Let us next prove \eqref{TaylorInequality1}; fix $g\in L^1(\mathbb{R}^N)$ and pick a sequence $\{g_j\}\in \mathcal{D}(\mathbb{R}^N)$ such that $g_j\rightarrow g$ in $L^1(\mathbb{R}^N)$. For each $j$, using the fact that $\|E_\alpha(t)\|_1=1$, we have
\begin{eqnarray*}
\|E_\alpha(t)\ast g-M_gE_\alpha(t)\|_1&\leq&\|E_\alpha(t)\ast g-E_\alpha(t)\ast g_j\|_1+\|E_\alpha(t)\ast g_j-M_{g_j}E_\alpha(t)\|_1+\|M_{g_j}E_\alpha(t)-M_gE_\alpha(t)\|_1\\
&\leq&\|g-g_j\|_1\|E_\alpha(t)\|_1+\|E_\alpha(t)\ast g_j-M_{g_j}E_\alpha(t)\|_1+|M_{g_j}-M_g|\|E_\alpha(t)\|_1\\
&\leq&2\|g-g_j\|_1+C\min\{t^{-1/2},\,t^{-1/\alpha}\}\|xg_j(x)\|_1.
\end{eqnarray*}
By \eqref{TaylorInequality2}, it follows that
$$\limsup_{t\rightarrow\infty}\|E_\alpha(t)\ast g-M_gE_\alpha(t)\|_1\leq 2\|g-g_j\|_1,$$
and the conclusion follows by letting $j\rightarrow\infty$.
\end{proof}
Before we end up the section, an important lemma will be used to provide  the fundamental solution of the diffusion equation $u_t + t^{\beta}\mathcal{L}u=0$.
\begin{lemma}\label{Property3}
Let us consider the homogeneous system
$$
\left\{\begin{array}{ll}
\displaystyle \partial_t u+t^{\beta}\mathcal{L} u=0,&\quad x\in\mathbb{R}^N,t>0,\\\\
 u(x,0) = u_0(x),&\quad x\in\mathbb{R}^N.\\
\end{array}\right.
$$
For every $t\geq t_0\geq 0$, it follows that
$$u(t)=E_\alpha\left(\frac{t^{\beta+1}}{\beta+1}-\frac{t_0^{\beta+1}}{\beta+1}\right)\ast u(t_0).$$
\end{lemma}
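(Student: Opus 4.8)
The plan is to remove the non-autonomous factor $t^{\beta}$ by a change of the time variable, reducing the problem to the autonomous equation $\partial_s v+\mathcal{L}v=0$, whose solution is already known to be $v(s)=S(s)v(0)=E_\alpha(s)\ast v(0)$ with $\{S(s)=e^{-s\mathcal{L}}\}_{s\ge0}$ the contraction semigroup generated by $-\mathcal{L}$. Concretely, I introduce the increasing bijection
$$\tau:[0,\infty)\to[0,\infty),\qquad \tau(t):=\frac{t^{\beta+1}}{\beta+1},\qquad \tau'(t)=t^{\beta},$$
(the identity when $\beta=0$), which on $(0,\infty)$ is a smooth diffeomorphism with inverse $\tau^{-1}(s)=((\beta+1)s)^{1/(\beta+1)}$.

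First I would work on $t>0$. Set $v(s):=u(\tau^{-1}(s))$, i.e. $v(\tau(t))=u(t)$. By the chain rule $\partial_t u(t)=\tau'(t)\,(\partial_s v)(\tau(t))=t^{\beta}(\partial_s v)(\tau(t))$, so substituting into $\partial_t u+t^{\beta}\mathcal{L}u=0$ and dividing by $t^{\beta}>0$ gives $\partial_s v(s)+\mathcal{L}v(s)=0$ for $s>s_0:=\tau(t_0)$, together with $v(s_0)=u(t_0)$. Since $-\mathcal{L}$ generates the strongly continuous contraction semigroup $S(\cdot)$ on $L^2(\mathbb{R}^N)$ with $S(s)w=E_\alpha(s)\ast w$, the unique solution of this autonomous Cauchy problem is $v(s)=S(s-s_0)\,u(t_0)=E_\alpha(s-s_0)\ast u(t_0)$ for all $s\ge s_0$. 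Translating back,
$$u(t)=v(\tau(t))=E_\alpha\big(\tau(t)-\tau(t_0)\big)\ast u(t_0)=E_\alpha\!\left(\frac{t^{\beta+1}}{\beta+1}-\frac{t_0^{\beta+1}}{\beta+1}\right)\ast u(t_0),$$
which is the claim for $t\ge t_0>0$. The case $t_0=0$ is then recovered by letting $t_0\downarrow0$: the right-hand side tends to $E_\alpha(\tau(t))\ast u_0$ by the strong continuity of $S(\cdot)$ at the origin (since $\tau(t_0)\to0$), and the left-hand side tends to $u(t)$ by continuity in time of the solution; equivalently, one checks directly that $w(t):=E_\alpha(\tau(t))\ast u_0$ satisfies $\partial_t w+t^{\beta}\mathcal{L}w=0$ with $w(0)=u_0$, using $\tfrac{d}{ds}S(s)=-\mathcal{L}S(s)$ and the semigroup identity $E_\alpha(a)\ast E_\alpha(b)=E_\alpha(a+b)$ of Lemma \ref{Property1}$(iii)$, and concludes by uniqueness.

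The step that requires the most care is precisely the degeneracy of the equation at $t=0$ when $\beta>0$: there $\tau'(0)=0$ and $\tau^{-1}$ is not differentiable at $s=0$, so the chain-rule reduction is legitimate only on $t>0$, and the endpoint $t_0=0$ must be handled by the limiting argument above (or, more robustly, by verifying that the proposed kernel defines an evolution family $U(t,t_0)=S(\tau(t)-\tau(t_0))$ solving the problem and invoking uniqueness). Everything else — the semigroup property, the representation $S(s)v=E_\alpha(s)\ast v$, and the composition rule for $E_\alpha$ — is available from the preliminaries, so no genuinely new estimate is needed.
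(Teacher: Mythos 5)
Your proof is correct and follows essentially the same route as the paper: reduce to the autonomous equation $\partial_s v+\mathcal{L}v=0$ via the time change $s=\tau(t)=t^{\beta+1}/(\beta+1)$ (the paper first shifts $v(x,t)=u(x,t+t_0)$ and then changes variables, which amounts to the same substitution) and then invoke the semigroup representation $E_\alpha$. Your additional care at the degenerate endpoint $t_0=0$ for $\beta>0$ is a sensible refinement that the paper's proof passes over silently, but it does not constitute a different argument.
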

\begin{proof} Multiply the homogeneous equation by $t^{-\beta}$, we get $t^{-\beta}\partial_t u+\mathcal{L} u=0$. Let $v(x,t):=u(x,t+t_0)$, then
$$(t+t_0)^{-\beta}\partial_t v(x,t)+\mathcal{L} v(x,t)=0.$$
By considering the change of variable
$$\tau=\frac{(t+t_0)^{\beta+1}}{\beta+1}-\frac{t_0^{\beta+1}}{\beta+1},$$
and denoting
$$\widetilde{v}(x,\tau)=v(x,t),$$
we obtain
$$\partial_\tau \widetilde{v}(x,\tau)+\mathcal{L}\widetilde{v}(x,\tau)=0,$$
which yields to
$$\widetilde{v}(x,\tau)=E_\alpha(\tau)\ast \widetilde{v}(x,0),$$
i.e.
$$v(x,t)=E_\alpha\left(\frac{(t+t_0)^{\beta+1}}{\beta+1}-\frac{t_0^{\beta+1}}{\beta+1} \right)\ast v(x,0).$$
As $v(x,0)=u(x,t_0)$, we conclude that 
$$u(x,t)=v(x,t-t_0)=E_\alpha\left(\frac{t^{\beta+1}}{\beta+1}-\frac{t_0^{\beta+1}}{\beta+1} \right)\ast u(x,t_0).$$
\end{proof}
\begin{definition}[Mild solution]
Let $u_0\in  C_0(\mathbb{R}^N)$, $\alpha\in(0,2)$, $\beta\geq 0$, $p>1$, and $T>0$. We say that $u\in C([0,T),C_0(\mathbb{R}^N))$
is a mild solution of problem \eqref{eq} if $u$ satisfies the following integral equation
\begin{equation}\label{IE}
    u(t)=E_\alpha\left(\frac{t^{\beta+1}}{\beta+1}\right)\ast u_0(x)-\int_{0}^th(s) E_\alpha\left(\frac{t^{\beta+1}}{\beta+1}-\frac{s^{\beta+1}}{\beta+1} \right)\ast |u|^{p-1}u(x,s)\,ds,\quad t\in[0,T).
\end{equation}
More general, for all $0\leq t_0\leq t<T$, we have
\begin{equation}\label{IEG}
    u(t)=E_\alpha\left(\frac{t^{\beta+1}}{\beta+1}-\frac{t_0^{\beta+1}}{\beta+1} \right)\ast u(x,t_0)-\int_{t_0}^th(s) E_\alpha\left(\frac{t^{\beta+1}}{\beta+1}-\frac{s^{\beta+1}}{\beta+1} \right)\ast |u|^{p-1}u(x,s)\,ds.
\end{equation}
\end{definition}
We refer the reader to \cite{CH,6} to get the existence, the uniqueness and the regularity of mild solution of \eqref{eq}.
\begin{theorem}[Global existence]\label{T0}
Given $0\leq u_0\in  L^1(\mathbb{R}^N)\cap C_0(\mathbb{R}^N)$, $\alpha\in(0,2)$, $\beta\geq 0$, and $p>1$. Then, problem \eqref{eq} has a unique global mild solution 
$$u\in C([0,\infty),L^1(\mathbb{R}^N)\cap C_0(\mathbb{R}^N))\cap C^1((0,\infty),L^2(\mathbb{R}^N))\cap C((0,\infty),H^2(\mathbb{R}^N)).$$
\end{theorem}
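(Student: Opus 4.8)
\textit{Proof proposal.} The strategy is the classical one: reduce the linear part to the autonomous case via the time change of Lemma \ref{Property3}, build a local mild solution by the contraction mapping principle in a space controlling simultaneously the $L^1$ and the $C_0$ norm, then exploit the dissipative sign of $-h(t)u^p$ to get uniform a priori bounds that rule out blow-up, and finally upgrade the regularity by parabolic smoothing.

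Set $\phi(t):=t^{\beta+1}/(\beta+1)$, so that by Lemma \ref{Property3} the Duhamel formulation \eqref{IE} reads $u(t)=E_\alpha(\phi(t))\ast u_0-\int_0^t h(s)E_\alpha(\phi(t)-\phi(s))\ast(|u|^{p-1}u)(s)\,ds$. For $T>0$ work in $X_T:=C([0,T];L^1(\mathbb{R}^N)\cap C_0(\mathbb{R}^N))$ with $\|u\|_{X_T}:=\sup_{[0,T]}(\|u(t)\|_1+\|u(t)\|_\infty)$, and on the ball $B_R$ of radius $R:=2(\|u_0\|_1+\|u_0\|_\infty)$ consider the map $\Psi$ given by the right-hand side above. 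Using that $f\mapsto E_\alpha(\tau)\ast f$ is a contraction on $L^1$ and on $L^\infty$ (this is \eqref{Pr1} with $r=q$, together with $\|E_\alpha(\tau)\|_1=1$ from Lemma \ref{Property1}) and that $w\mapsto|w|^{p-1}w$ is Lipschitz on $B_R$ with constant $\lesssim R^{p-1}$, one obtains $\|\Psi(u)\|_{X_T}\le\|u_0\|_1+\|u_0\|_\infty+CR^p\int_0^T h(s)\,ds$ and $\|\Psi(u)-\Psi(v)\|_{X_T}\lesssim R^{p-1}\big(\int_0^T h(s)\,ds\big)\|u-v\|_{X_T}$. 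Since $h\in L^1_{loc}$, $\int_0^T h\to0$ as $T\to0$, hence for $T$ small $\Psi:B_R\to B_R$ is a contraction, giving a mild solution on a maximal interval $[0,T_{\max})$; continuity in $L^1\cap C_0$ is built into $X_T$, and uniqueness in the full class $C([0,T];C_0)$ follows from a standard Gronwall argument.

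Nonnegativity and the a priori bounds come from the sign of the nonlinearity. Since $u_0\ge0$, $E_\alpha(\tau)\ge0$ (Lemma \ref{Property1}(i)) and $h>0$, the comparison principle mentioned above gives $0\le u(t)\le E_\alpha(\phi(t))\ast u_0$ on $[0,T_{\max})$; in particular $|u|^{p-1}u=u^p$, $\|u(t)\|_\infty\le\|u_0\|_\infty$ and $\|u(t)\|_1\le\|u_0\|_1$ — the last bound also follows by integrating \eqref{eq} over $\mathbb{R}^N$, which yields $M'(t)=-h(t)\int_{\mathbb{R}^N}u^p\,dx\le0$. Now the existence time in the contraction step depends only on $R=2(\|u_0\|_1+\|u_0\|_\infty)$ and on the modulus of continuity of $\tau\mapsto\int_0^\tau h$; because $\|u(t)\|_1+\|u(t)\|_\infty$ stays $\le\|u_0\|_1+\|u_0\|_\infty$ for all $t$ and $\tau\mapsto\int_0^\tau h$ is uniformly continuous on every compact interval, the solution can be restarted via \eqref{IEG} with a uniform time step on each $[0,A]$; hence $T_{\max}=\infty$.

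It remains to prove $u\in C^1((0,\infty);L^2)\cap C((0,\infty);H^2)$. After the change of variables $\tau=\phi(t)$, $v(\tau):=u(t)$, problem \eqref{eq} becomes the autonomous equation $\partial_\tau v+\mathcal{L}v=-\widetilde h(\tau)v^p$ with $\widetilde h(\tau):=h(\phi^{-1}(\tau))\,(\phi^{-1}(\tau))^{-\beta}\in L^1_{loc}$, whose linear part is generated by the analytic, positivity-preserving contraction semigroup $e^{-\tau\mathcal{L}}=E_\alpha(\tau)\ast\cdot$ on $L^2(\mathbb{R}^N)$ with $D(\mathcal{L})=H^2(\mathbb{R}^N)$. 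Because $v\in C([0,\infty);L^1\cap C_0)$ with $\sup_\tau\|v(\tau)\|_\infty<\infty$, the forcing $-\widetilde h\,v^p$ lies in $L^1_{loc}((0,\infty);L^1\cap L^\infty)\subset L^1_{loc}((0,\infty);L^2)$, so the mild solution is a strong solution for $\tau>0$ with $v(\tau)\in H^2$ and $\partial_\tau v(\tau)\in L^2$; standard regularity theory for analytic semigroups (see \cite{CH,6}) then upgrades this to $v\in C((0,\infty);H^2)\cap C^1((0,\infty);L^2)$, and undoing the smooth change of variables gives the claim. I expect this last step to be the main obstacle: since $h$ is only locally integrable, the nonlinear forcing is not continuous in $\tau$, so the naive Duhamel/analytic-semigroup estimates do not directly yield $C^1$-in-time regularity. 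This is circumvented by combining the instantaneous $L^2\to H^2$ smoothing of $e^{-\tau\mathcal{L}}$ with a maximal-$L^1$-regularity argument (or, away from $\tau=0$ and from possible zeros of $v$, by further absorbing $\widetilde h$ into the time variable $\sigma=\int_0^\tau\widetilde h$), exactly as in \cite{CH,6}.
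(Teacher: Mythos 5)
The paper does not actually prove Theorem \ref{T0}: it simply refers the reader to \cite{CH,6} for existence, uniqueness and regularity. So there is no in-paper argument to match yours against; what you supply is essentially the standard fixed-point-plus-continuation scheme that those references implement. That part of your proposal is sound: the contraction in $C([0,T];L^1\cap C_0)$ using $\|E_\alpha(\tau)\ast f\|_q\le\|f\|_q$ and the local Lipschitz character of $w\mapsto|w|^{p-1}w$, the smallness of $\int_0^T h$ (note this silently upgrades the hypothesis $h\in L^1_{loc}(0,\infty)$ to integrability of $h$ near $t=0$, which the paper also uses tacitly, e.g.\ in \eqref{mass}), and the globalization via the a priori bound $0\le u(t)\le E_\alpha(\phi(t))\ast u_0$ --- which, once nonnegativity is known, follows directly from \eqref{IE} rather than from Lemma \ref{Comparison}, since the Duhamel correction term is nonpositive. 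The restart argument with a step size depending only on $\|u_0\|_1+\|u_0\|_\infty$ and on $\int_0^\cdot h$ is correct.

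The genuine gap is in the last step, and you have correctly located it but not closed it. With $h$ merely locally integrable, the claimed regularity $u\in C^1((0,\infty),L^2(\mathbb{R}^N))$ cannot be obtained: from the equation itself, $\partial_t u=-t^\beta\mathcal{L}u-h(t)u^p$, continuity of $\partial_t u$ in $L^2$ forces $h\,u^p$ to be continuous in time, which fails whenever $h$ has a jump and $u\not\equiv0$. Neither of your proposed repairs works: maximal $L^1$-regularity only yields $v\in W^{1,1}_{loc}((0,\infty);L^2)$, not $C^1$; and the substitution $\sigma=\int_0^\tau\widetilde h$ does not remove $\widetilde h$ from the problem, because the diffusion term $\mathcal{L}v$ carries no factor of $\widetilde h$, so the rescaled equation is no better than the original. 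The honest conclusion is that the stated $C^1$ (and $C((0,\infty),H^2)$ via the equation) regularity requires an additional hypothesis such as $h$ continuous (or locally H\"older) on $(0,\infty)$, under which the classical analytic-semigroup bootstrap of \cite{CH} applies after the time change of Lemma \ref{Property3}; as written, your argument proves global existence and uniqueness of the mild solution in $C([0,\infty),L^1\cap C_0)$ but not the full regularity asserted in Theorem \ref{T0}. This is as much a defect of the theorem's hypotheses as of your proof, but it should be stated rather than deferred to an ``exactly as in \cite{CH,6}''.
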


\begin{lemma}[Nonnegativity]\label{nonnegativity}
Let $T>0$. If $u$ is a mild solution of problem \eqref{eq} on $[0,T)$, and $u_0\geq 0$, then $u(x,t)\geq 0$ for almost everywhere $x\in \mathbb{R}^N$ and for all $t\in [0,T)$.
\end{lemma}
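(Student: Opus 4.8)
The plan is to run an energy argument at the level of the negative part $u^-:=\max\{-u,0\}$, exploiting the regularity $u\in C^1((0,T),L^2(\mathbb{R}^N))\cap C((0,T),H^2(\mathbb{R}^N))$ provided by Theorem \ref{T0}, which makes $u$ a strong solution of $\partial_t u+t^\beta\mathcal{L}u=-h(t)|u|^{p-1}u$ on $(0,T)$. First I would pair this equation in $L^2(\mathbb{R}^N)$ with the test function $-u^-(t)\in H^1(\mathbb{R}^N)$. Since $w\mapsto\|w^-\|_{L^2}^2$ is $C^1$ on $L^2(\mathbb{R}^N)$ with Fréchet derivative $v\mapsto 2\langle -w^-,v\rangle$, the time term produces $\tfrac12\frac{d}{dt}\|u^-(t)\|_{L^2}^2$; using that $|u|^{p-1}u=-(u^-)^p$ and $-u^-=u$ on $\{u<0\}$ while both vanish on $\{u\ge0\}$, the reaction term produces $-h(t)\|u^-(t)\|_{L^{p+1}}^{p+1}\le0$. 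The resulting identity is $\tfrac12\frac{d}{dt}\|u^-(t)\|_{L^2}^2 = -t^\beta\langle\mathcal{L}u(t),-u^-(t)\rangle - h(t)\|u^-(t)\|_{L^{p+1}}^{p+1}$.

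The crux is then to show the diffusion term has a favorable sign, namely $\langle\mathcal{L}u(t),-u^-(t)\rangle\ge0$. For the local part, integrating by parts together with $\nabla u^-=-\mathbf 1_{\{u<0\}}\nabla u$ (valid since $u(t)\in H^2\subset H^1$) gives $\langle-\Delta u,-u^-\rangle=\|\nabla u^-\|_{L^2}^2\ge0$. For the nonlocal part I would move to the quadratic (Gagliardo) form — legitimate because $u(t)\in H^2\subset H^{\alpha/2}$ and $u^-(t)\in H^1\subset H^{\alpha/2}$ — writing $\langle(-\Delta)^{\alpha/2}u,-u^-\rangle=-\tfrac{C_{N,\alpha/2}}{2}\iint\frac{(u(x)-u(y))(u^-(x)-u^-(y))}{|x-y|^{N+\alpha}}\,dx\,dy$, decomposing $u=u^+-u^-$, and invoking the pointwise sign inequality $(u^+(x)-u^+(y))(u^-(x)-u^-(y))\le0$ (checked by a short case analysis on the signs of $u(x)$ and $u(y)$, using that $u^+$ and $u^-$ have disjoint supports) to conclude $\langle(-\Delta)^{\alpha/2}u,u^-\rangle\le0$. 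Equivalently, one may simply observe that $-\mathcal{L}$ generates a positivity-preserving semigroup (its kernel $E_\alpha(t)$ is nonnegative by Lemma \ref{Property1}), which is the structural reason behind these inequalities. I expect this to be the step requiring the most care, principally in justifying the Gagliardo representation and the integration by parts for the functions at hand.

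Combining the three contributions gives $\tfrac12\frac{d}{dt}\|u^-(t)\|_{L^2}^2\le0$ on $(0,T)$; integrating from $t_0$ to $t$ — the clean way to accommodate only $h\in L^1_{loc}$ — shows that $t\mapsto\|u^-(t)\|_{L^2}^2$ is non-increasing on $(0,T)$. Finally I would let $t_0\to0^+$: since $u\in C([0,T),L^1(\mathbb{R}^N)\cap C_0(\mathbb{R}^N))$ one has $\|u(t)-u_0\|_{L^2}^2\le\|u(t)-u_0\|_{L^\infty}\|u(t)-u_0\|_{L^1}\to0$, and as $w\mapsto w^-$ is $1$-Lipschitz on $\mathbb{R}$ this gives $\|u^-(t)-u_0^-\|_{L^2}\to0$ with $u_0^-=0$; hence $\|u^-(t)\|_{L^2}\to0$ as $t\to0^+$. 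Monotonicity then forces $\|u^-(t)\|_{L^2}\equiv0$ on $(0,T)$, i.e. $u(t)\ge0$ a.e. for every $t\in(0,T)$, while $u(0)=u_0\ge0$ by hypothesis. (A shorter alternative, if one is willing to use the comparison principle mentioned above: $v\equiv0$ is a mild solution of \eqref{eq} with initial datum $0\le u_0$, so $u\ge0$ by comparison; the energy argument, however, keeps the proof self-contained.)
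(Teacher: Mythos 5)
Your proposal is correct and follows essentially the same route as the paper: test the equation against the negative part $u^-$, show that the diffusive terms contribute a favorable sign so that $\tfrac{d}{dt}\|u^-(t)\|_{L^2}^2\le 0$, and conclude $u^-\equiv 0$ from $u_0^-=0$. The only (harmless) differences are that you handle the fractional term via the Gagliardo bilinear form and the pointwise inequality $(u^+(x)-u^+(y))(u^-(x)-u^-(y))\le 0$, whereas the paper splits $u=u^+-u^-$ and uses $(-\Delta)^{\alpha/2}(u^+)\,u^-\le 0$ together with self-adjointness, and that you justify the passage $t_0\to 0^+$ more carefully than the paper's direct integration from $t=0$.
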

\begin{proof}
Our goal is to prove $u^-=0$, where $u=u^+-u^-$, $u^+=\max(u,0)$, and $u^-=\max(-u,0)$. Multiplying the first equation of the system \eqref{eq} by $u^-$ and integrating over $\mathbb{R}^N$, we obtain
\begin{equation*}
\int_{\mathbb{R}^N}u_tu^- \,dx=\int_{\mathbb{R}^N}t^\beta\Delta(u)u^-\,dx -\int_{\mathbb{R}^N}t^\beta(-\Delta)^{\alpha/2}(u) u^-\,dx - \int_{\mathbb{R}^N}h(t)|u|^{p-1}u u^-\,dx.
\end{equation*}
Using the identities $u^+\, u^-=\Delta(u^+)u^-=0$, and $(-\Delta)^{\alpha/2}(u^+)u^-\leq 0$ almost everywhere, we get
\begin{equation}
-\int_{\mathbb{R}^N}u^{-}_t u^- \,dx\geq-\int_{\mathbb{R}^N}t^\beta\Delta(u^-)u^- \,dx +\int_{\mathbb{R}^N}t^\beta(-\Delta)^{\alpha/2}(u^-) u^- \,dx + \int_{\mathbb{R}^N}h(t)|u|^{p-1}(u^-)^2\,dx.
\end{equation}
Thus
\begin{equation}\label{MP1}
\frac{1}{2}\frac{d}{dt}\int_{\mathbb{R}^N}(u^{-})^2 \,dx\leq t^\beta\int_{\mathbb{R}^N}\Delta(u^-)u^- \,dx -t^\beta\int_{\mathbb{R}^N}(-\Delta)^{\alpha/2}(u^-) u^- \,dx\\
 -h(t) \int_{\mathbb{R}^N}|u|^{p-1}(u^-)^2\,dx.
\end{equation}
Applying Green's theorem, we have
\begin{equation}\label{MP2}
\int_{\mathbb{R}^N}\Delta(u^-)u^- \,dx=-\int_{\mathbb{R}^N}|\nabla(u^-)|^2\,dx,
\end{equation} 
and, using the self-adjoint property, we obtain
\begin{equation}\label{MP3}
\int_{\mathbb{R}^N}u^- (-\Delta)^{\alpha/2}u^-\,dx=\int_{\mathbb{R}^N}[(-\Delta)^{\alpha/4}(u^-)] ^2 \,dx.
\end{equation}
Inserting \eqref{MP2} and \eqref{MP3} into \eqref{MP1}, we infer that
\begin{equation}\label{MP4}
\frac{1}{2}\frac{d}{dt}\int_{\mathbb{R}^N}(u^{-})^2 \,dx\leq 0.
\end{equation}
Integrating \eqref{MP4} with respect to time, we arrive at 
\begin{equation}
\int_{\mathbb{R}^N}(u^{-}(x,t))^2 \,dx \leq \int_{\mathbb{R}^N}(u^{-}_0(x))^2 \,dx=0,\quad\hbox{for all}\,\, t\geq 0,
\end{equation}
which implies that $u^-=0$ a.e. $x\in\mathbb{R}^N$, for all $t\in[0,T)$,  and hence, $u=u^+\geq 0$.
\end{proof}
\begin{lemma}[The comparison principle]\label{Comparison}
Let $T>0$, and let $u$ and $v$, respectively, be mild solutions of problem \eqref{eq} with initial data $u_0$ and $v_0$, respectively. If $0\leq u_0\leq v_0$, then $0\leq u(x,t)\leq v(x,t)$ for almost every $x\in \mathbb{R}^N$ and for all $t\in [0,T)$.
\end{lemma}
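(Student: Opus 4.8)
The bound $0\le u$ is already contained in Lemma \ref{nonnegativity} (and likewise $0\le v$), so the whole point is the ordering $u\le v$. The plan is to run the $L^2$-energy argument of Lemma \ref{nonnegativity}, now applied to the difference $w:=u-v$. By Theorem \ref{T0} both $u$ and $v$ lie in $C([0,\infty),L^1(\mathbb{R}^N)\cap C_0(\mathbb{R}^N))\cap C^1((0,\infty),L^2(\mathbb{R}^N))\cap C((0,\infty),H^2(\mathbb{R}^N))$, so $w(t)\in H^2(\mathbb{R}^N)$ for each $t>0$, $t\mapsto w(t)$ is $C^1$ into $L^2(\mathbb{R}^N)$ on $(0,\infty)$ and continuous into $L^1\cap C_0$ up to $t=0$, and
$$\partial_t w+t^\beta\mathcal{L}w=-h(t)\big(u^p-v^p\big),\qquad x\in\mathbb{R}^N,\ t>0,\qquad w(\cdot,0)=u_0-v_0\le 0.$$
I will show that $w^+\equiv 0$, where $w=w^+-w^-$ and $w^+=\max(w,0)$.

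First I would multiply the equation by $w^+$ and integrate over $\mathbb{R}^N$, obtaining $\tfrac12\tfrac{d}{dt}\!\int_{\mathbb{R}^N}(w^+)^2\,dx+t^\beta\!\int_{\mathbb{R}^N}\mathcal{L}w\cdot w^+\,dx=-h(t)\!\int_{\mathbb{R}^N}(u^p-v^p)\,w^+\,dx$. For the local part, using $w^+w^-=0$, $\Delta(w^-)\,w^+=0$ a.e.\ and Green's theorem exactly as for \eqref{MP2}, one gets $-\int_{\mathbb{R}^N}\Delta w\cdot w^+\,dx=\int_{\mathbb{R}^N}|\nabla w^+|^2\,dx\ge 0$. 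For the nonlocal part I would pass to the Gagliardo form of $(-\Delta)^{\alpha/2}$ (legitimate since $w\in H^2\subset H^{\alpha/2}$ and $w^+\in H^1\subset H^{\alpha/2}$),
$$\int_{\mathbb{R}^N}(-\Delta)^{\alpha/2}w\cdot w^+\,dx=\frac{C_{N,\alpha/2}}{2}\iint_{\mathbb{R}^N\times\mathbb{R}^N}\frac{\big(w(x)-w(y)\big)\big(w^+(x)-w^+(y)\big)}{|x-y|^{N+\alpha}}\,dx\,dy\ge 0,$$
because the pointwise inequality $(a-b)(a^+-b^+)\ge (a^+-b^+)^2\ge 0$ holds for all $a,b\in\mathbb{R}$; equivalently one may invoke $(-\Delta)^{\alpha/2}(w^-)\,w^+\le 0$ a.e.\ and self-adjointness as for \eqref{MP3}. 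Hence $t^\beta\int_{\mathbb{R}^N}\mathcal{L}w\cdot w^+\,dx\ge 0$.

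It remains to control the reaction term. On the set $\{w>0\}=\{u>v\}$ we have $u>v\ge 0$, so monotonicity of $\sigma\mapsto\sigma^p$ on $[0,\infty)$ gives $u^p-v^p\ge 0$ there; thus $(u^p-v^p)\,w^+\ge 0$ a.e.\ and, since $h>0$, $-h(t)\int_{\mathbb{R}^N}(u^p-v^p)\,w^+\,dx\le 0$. Combining the three contributions,
$$\frac12\frac{d}{dt}\int_{\mathbb{R}^N}(w^+)^2\,dx=-t^\beta\int_{\mathbb{R}^N}\mathcal{L}w\cdot w^+\,dx-h(t)\int_{\mathbb{R}^N}(u^p-v^p)\,w^+\,dx\le 0,$$
and integrating in time together with $w^+(\cdot,0)=(u_0-v_0)^+\equiv 0$ yields $\int_{\mathbb{R}^N}(w^+(x,t))^2\,dx\le 0$ for all $t\in[0,T)$. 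Hence $w^+\equiv 0$, i.e.\ $u\le v$, and with Lemma \ref{nonnegativity} this gives $0\le u\le v$.

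At the level of rigor this is essentially the computation of Lemma \ref{nonnegativity}; the only additional bookkeeping is the convergence of all integrals (which follows from $u,v\in H^2$, so that $(-\Delta)^{\alpha/2}w,\ \Delta w\in L^2$ and $w^+,\ \nabla w^+\in L^2$) and the transport of the initial inequality $u_0\le v_0$ to times $t>0$ via the $C([0,\infty);L^1\cap C_0)$ regularity. The genuinely new ingredient is the sign of the reaction term, and I expect that to be the crux: it is exactly here that nonnegativity of $u$ and $v$ enters, through the monotonicity of $\sigma\mapsto\sigma^p$ on $[0,\infty)$, and the argument would break down for sign-changing data.
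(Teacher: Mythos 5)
Your argument is correct and is structurally the same as the paper's: form the difference of the two solutions, test the resulting equation with the appropriate signed part, handle $-\Delta$ by Green's theorem and $(-\Delta)^{\alpha/2}$ by self-adjointness (or, equivalently, your Kato-type pointwise inequality $(a-b)(a^+-b^+)\ge (a^+-b^+)^2$), and deduce $\frac{d}{dt}\int (w^{\pm})^2\,dx\le 0$ with vanishing initial energy. The one step you do genuinely differently is the reaction term. The paper sets $w=v-u$ and \emph{linearizes}, writing $0\le v^p-u^p\le C(\|u\|_\infty^{p-1}+\|v\|_\infty^{p-1})(v-u)$ so that the nonlinearity becomes a linear zero-order term that is harmless when tested against $w^-$; note that this displayed two-sided bound, as written, already presumes $v\ge u$ everywhere (the statement that is actually available a priori is $|v^p-u^p|\le C(\|u\|_\infty^{p-1}+\|v\|_\infty^{p-1})|v-u|$, after which the signs still close when paired with $w^-$). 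You instead set $w=u-v$ and observe directly that $(u^p-v^p)w^+\ge 0$ a.e., because $w^+$ lives on $\{u>v\ge 0\}$ where $\sigma\mapsto\sigma^p$ is increasing; this is a touch more economical (no $L^\infty$ bounds or Lipschitz constant are needed) and avoids the slightly circular-looking sign claim, at the price of invoking Lemma \ref{nonnegativity} for \emph{both} solutions before starting, whereas the paper's linearization would also apply to the sign-changing formulation $|u|^{p-1}u$. Both routes are sound at the paper's level of rigor, and your bookkeeping on integrability and on the $t\to 0$ limit matches what Theorem \ref{T0} provides.
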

\begin{proof}
Let $w(x,t)=v(x,t)-u(x,t)$, then
$$
\left\{\begin{array}{ll}
w_t=-t^\beta\mathcal{L} w-h(t)(v^p-u^p),&\quad x\in\mathbb{R}^N,\,t>0,\\
w(x,0)=w_0(x)\geq 0,&\quad x\in\mathbb{R}^N.
\end{array}
\right.
$$
Using the following estimation
$$0\leq v^p-u^p\leq C(v-u)(v^{p-1}+u^{p-1})\leq C (\|v\|_\infty^{p-1}+\|u\|_\infty^{p-1}) w,$$
we arrive at
$$
\left\{\begin{array}{ll}
w_t\geq -t^\beta\mathcal{L} w-C\,h(t)(\|v\|_\infty^{p-1}+\|u\|_\infty^{p-1}) w,&\quad x\in\mathbb{R}^N,\,t>0,\\
w(x,0)=w_0(x)\geq 0,&\quad x\in\mathbb{R}^N.
\end{array}
\right.
$$
Applying similar calculations as in the proof of Lemma \ref{nonnegativity}, we conclude that $w=w^+\geq 0$ a.e. $x\in\mathbb{R}^N$, for all $t\in [0,T)$,  and therefore $v\geq u$. This completes the proof of Lemma \ref{Comparison}.
\end{proof}

\section{Main results}\label{sec3}

We deal with problem \eqref{eq} and we study the decay of the ``mass''
\begin{equation}\label{mass}
    M(t)\equiv \int_{\mathbb{R}^N}u(x,t)\,dx=\int_{\mathbb{R}^N}u_0(x)\,dx -\int_0^t\int_{\mathbb{R}^N}h(s)u^p(x,s)\,dxds.
\end{equation}
In order to obtain equality \eqref{mass}, it
suffices to integrate \eqref{IE} with respect to $x$, using property $(ii)$ in Lemma \ref{Property1}, and applying the Fubini theorem.

Since we limit ourselves to non-negative solutions, the function
$M(t)$ defined in \eqref{mass} is non-negative and non-increasing. Hence, the limit
$M_\infty = \lim_{t\rightarrow\infty}M(t)$ exists and we answer
the question whether it is equal to zero or not.
In our first theorem, the  diffusion phenomena determine the large time
asymptotics of solution to \eqref{eq}.
\begin{theorem}\label{decay}
Let $u=u(x,t)$ be a non-negative nontrivial global mild solution of
$\eqref{eq}$. If
\begin{equation}\label{conditionh}
 \int_1^\infty  t^{-\frac{N}{\alpha}(p-1)(1+\beta)}h(t)dt<\infty,
\end{equation} 
then
\begin{equation}\label{2.4}
\lim_{t\to\infty}M(t)=M_\infty >0.
\end{equation}
Moreover, for all $q\in[1,\infty)$,
\begin{equation}\label{2.3}
    t^{\frac{N}{\alpha}\left(1-\frac{1}{q}\right)(1+\beta)}\left\|u(t)-M_\infty
E_\alpha\left(\frac{t^{\beta+1}}{\beta+1}\right)\right\|_q\longrightarrow
0\quad\hbox{as}\;\; t\to\infty.
\end{equation}
\end{theorem}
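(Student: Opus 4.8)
Throughout write $\tau(t):=t^{\beta+1}/(\beta+1)$ and $\gamma:=\tfrac{N}{\alpha}(1-\tfrac1q)(1+\beta)$. The proof falls into two essentially independent parts: the strict positivity of $M_\infty$, and then the refined asymptotics \eqref{2.3}. Since $u\ge 0$ and $h\ge 0$, the integral equation \eqref{IE} and the positivity of $E_\alpha$ force the pointwise comparison $0\le u(t)\le E_\alpha(\tau(t))\ast u_0$. Inserting this into \eqref{Pr1} of Lemma \ref{Property2}, and noting that the minimum there is attained by the fractional ($N/\alpha$) rate once the argument exceeds $1$ (because $\alpha<2$), gives
$$\|u(t)\|_q\ \le\ C\,t^{-\frac{N}{\alpha}(1-\frac1q)(1+\beta)}\,\|u_0\|_1,\qquad t\ \text{large},\ \ 1\le q\le\infty.$$
In particular $\|u(s)\|_p^p\le C\,s^{-\frac{N}{\alpha}(p-1)(1+\beta)}\|u_0\|_1^p$ and $\|u(s)\|_\infty^{p-1}\le C\,s^{-\frac{N}{\alpha}(p-1)(1+\beta)}\|u_0\|_1^{p-1}$ for $s$ large; combined with \eqref{conditionh}, with $h\in L^1_{\mathrm{loc}}$, and with the crude bound $\|u(s)\|_p^p\le\|u(s)\|_\infty^{p-1}M(0)$ on $(0,1)$, this shows that $\int_0^\infty h(s)\|u(s)\|_p^p\,ds$ and $\int_0^\infty h(s)\|u(s)\|_\infty^{p-1}\,ds$ are both finite. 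Hence, by \eqref{mass}, $M_\infty=M(0)-\int_0^\infty h(s)\|u(s)\|_p^p\,ds$ exists and is $\ge 0$.

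To upgrade this to $M_\infty>0$ I use the absorption sign: from $0\le u^p\le\|u(t)\|_\infty^{p-1}u$, the solution $u$ is a supersolution of the linear Cauchy problem $\partial_t w+t^{\beta}\mathcal{L}w=-h(t)\|u(t)\|_\infty^{p-1}w$, $w(0)=u_0$, which, by Lemma \ref{Property3} and the integrating factor, equals $w(t)=\exp\bigl(-\int_0^t h(s)\|u(s)\|_\infty^{p-1}\,ds\bigr)\,E_\alpha(\tau(t))\ast u_0$. The comparison argument of Lemma \ref{Comparison} (the same $u^-$–energy computation) then yields $u(t)\ge w(t)\ge 0$, and integrating in $x$ with $\int_{\mathbb{R}^N}E_\alpha(\tau(t))\ast u_0\,dx=M(0)$ we obtain
$$M(t)\ \ge\ \exp\Bigl(-\int_0^\infty h(s)\|u(s)\|_\infty^{p-1}\,ds\Bigr)M(0)\ >\ 0\qquad\text{for all }t\ge 0,$$
so that $M_\infty>0$, which is \eqref{2.4}.

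For \eqref{2.3}, using \eqref{IE} and the identity $M_\infty=M(0)-\int_0^\infty h(s)\|u(s)\|_p^p\,ds$, I split
$$u(t)-M_\infty E_\alpha(\tau(t))\ =\ A(t)-B(t)+D(t),$$
with $A(t)=E_\alpha(\tau(t))\ast u_0-M(0)E_\alpha(\tau(t))$, $D(t)=\bigl(\int_t^\infty h(s)\|u(s)\|_p^p\,ds\bigr)E_\alpha(\tau(t))$, and $B(t)=\int_0^t h(s)\bigl[E_\alpha\bigl(\tfrac{t^{\beta+1}-s^{\beta+1}}{\beta+1}\bigr)\ast u^p(s)-\|u(s)\|_p^p\,E_\alpha(\tau(t))\bigr]ds$. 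It is enough to prove that $t^{\gamma}$ times the $L^q$–norm of each piece tends to $0$. For $D$: since $\|E_\alpha(\tau(t))\|_q\le C t^{-\gamma}$, one gets $t^\gamma\|D(t)\|_q\le C\int_t^\infty h(s)\|u(s)\|_p^p\,ds\to 0$. For $A$: by the semigroup property (Lemma \ref{Property1}(iii)), $A(t)=E_\alpha\bigl(\tfrac{\tau(t)}{2}\bigr)\ast\bigl[E_\alpha\bigl(\tfrac{\tau(t)}{2}\bigr)\ast u_0-M(0)E_\alpha\bigl(\tfrac{\tau(t)}{2}\bigr)\bigr]$, so \eqref{Pr1} with $r=1$ and \eqref{TaylorInequality1} give $t^\gamma\|A(t)\|_q\le C\bigl\|E_\alpha\bigl(\tfrac{\tau(t)}{2}\bigr)\ast u_0-M(0)E_\alpha\bigl(\tfrac{\tau(t)}{2}\bigr)\bigr\|_1\to 0$.

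The term $B$ is the main obstacle. Split $\int_0^t=\int_0^{t/2}+\int_{t/2}^t$. On $(t/2,t)$ the times $\tau(s)$ and $\tau(t)$ are comparable and large, so a plain triangle inequality suffices: $\bigl\|E_\alpha\bigl(\tfrac{t^{\beta+1}-s^{\beta+1}}{\beta+1}\bigr)\ast u^p(s)\bigr\|_q\le\|u(s)\|_{pq}^p\le C\, t^{-\frac{N}{\alpha}(p-\frac1q)(1+\beta)}\|u_0\|_1^p$ and $\|u(s)\|_p^p\|E_\alpha(\tau(t))\|_q\le C\, t^{-\frac{N}{\alpha}(p-1)(1+\beta)-\gamma}\|u_0\|_1^p$, while \eqref{conditionh} yields $\int_{t/2}^t h(s)\,ds\le t^{\frac{N}{\alpha}(p-1)(1+\beta)}\eta(t)$ with $\eta(t)\to 0$; multiplying, $t^\gamma$ times the $(t/2,t)$–part of $B$ is $\le C\eta(t)\to 0$. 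On $(0,t/2)$ one has $\tau_1(s):=\tfrac{t^{\beta+1}-s^{\beta+1}}{\beta+1}\ge(1-2^{-(\beta+1)})\tau(t)$, so all times are large, and I write the bracket as $\bigl[E_\alpha(\tau_1)\ast u^p(s)-\|u(s)\|_p^pE_\alpha(\tau_1)\bigr]+\|u(s)\|_p^p\bigl[E_\alpha(\tau_1)-E_\alpha(\tau(t))\bigr]$. For the first term, the splitting $E_\alpha(\tau_1)=E_\alpha\bigl(\tfrac{\tau_1}{2}\bigr)\ast E_\alpha\bigl(\tfrac{\tau_1}{2}\bigr)$ and \eqref{Pr1} bound its $L^q$–norm by $C t^{-\gamma}\,\omega\bigl(s,\tfrac{\tau_1(s)}{2}\bigr)$, where $\omega(s,\sigma):=\|E_\alpha(\sigma)\ast u^p(s)-\|u(s)\|_p^pE_\alpha(\sigma)\|_1\le 2\|u(s)\|_p^p$ and, by \eqref{TaylorInequality1}, $\omega(s,\sigma)\to 0$ as $\sigma\to\infty$ for each fixed $s$; since $\tfrac{\tau_1(s)}{2}\to\infty$ uniformly for $s\in(0,t/2)$ and $h(\cdot)\|u(\cdot)\|_p^p\in L^1(0,\infty)$, dominated convergence gives $\int_0^{t/2}h(s)\,\omega\bigl(s,\tfrac{\tau_1(s)}{2}\bigr)\,ds\to 0$. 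For the second term, the bound $\|\partial_\tau E_\alpha(\tau)\|_1\le C\tau^{-1}$ (which follows from $\partial_\tau E_\alpha(\tau)=\Delta P_2(\tau)\ast P_\alpha(\tau)-P_2(\tau)\ast(-\Delta)^{\alpha/2}P_\alpha(\tau)$ and the self-similar forms of $P_2,P_\alpha$) plus one more semigroup splitting give $\|E_\alpha(\tau_1)-E_\alpha(\tau(t))\|_q\le C\,\tau(t)^{-1-\frac{N}{\alpha}(1-\frac1q)}(\tau(t)-\tau_1)=C\,\tau(t)^{-1-\frac{N}{\alpha}(1-\frac1q)}\tfrac{s^{\beta+1}}{\beta+1}$, so that $t^\gamma$ times this contribution is $\le C\,t^{-(\beta+1)}\int_0^{t/2}h(s)\|u(s)\|_p^p s^{\beta+1}\,ds$, which tends to $0$ by the elementary fact that $t^{-(\beta+1)}\int_1^{t/2}\bigl(h(s)s^{-\frac{N}{\alpha}(p-1)(\beta+1)}\bigr)s^{\beta+1}\,ds\to 0$ whenever $\int_1^\infty h(s)s^{-\frac{N}{\alpha}(p-1)(\beta+1)}\,ds<\infty$ (split the integral at a fixed large $T$). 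Combining everything yields $t^\gamma\|B(t)\|_q\to 0$, and the proof is complete. The genuinely delicate point is exactly this last term on $(0,t/2)$: there one needs simultaneously the \emph{sharp} decay rate $t^{-\gamma}$ and a true $o(1)$ factor, while the $L^1$–Taylor bound \eqref{TaylorInequality1} only provides a non-uniform $o(1)$; the remedy — the semigroup-splitting device together with dominated convergence in $s$, complemented by the weighted-tail estimate for the heat-kernel time increment — is the technical heart of the argument.
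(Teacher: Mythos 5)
Your proof is correct, but it takes a genuinely different route from the paper in both halves. For the positivity \eqref{2.4}, the paper rescales the initial datum: it compares $u$ with the solution $u^{\varepsilon}$ emanating from $\varepsilon u_0$, shows via the $L^1$--$L^p$ smoothing bound and \eqref{conditionh} that $\varepsilon^{-1}\int_0^\infty h\,\|u^{\varepsilon}\|_p^p\,dt=O(\varepsilon^{p-1})$, and picks $\varepsilon_0$ so that the loss is at most half of $\varepsilon_0 M(0)$. You instead build the explicit linear lower barrier $w(t)=\exp\bigl(-\int_0^t h(s)\|u(s)\|_\infty^{p-1}ds\bigr)E_\alpha(\tau(t))\ast u_0$ and run the same $u^-$--energy comparison as in Lemma \ref{Comparison}; this is equally valid (the finiteness of $\int_0^\infty h\|u\|_\infty^{p-1}$ follows from the same smoothing bound plus \eqref{conditionh}) and even yields the quantitative bound $M_\infty\ge e^{-C}M(0)$, at the mild cost of needing the comparison principle against a linear equation with a time-dependent zero-order coefficient rather than between two solutions of \eqref{eq}. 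For \eqref{2.3}, the paper first proves only the case $q=1$ by a four-term splitting involving a free intermediate time $t_0$ (sending $t\to\infty$ and then $t_0\to\infty$), and then obtains all $q>1$ essentially for free by H\"older interpolation between the $L^1$ convergence and the $L^m$ decay rates \eqref{E1}--\eqref{E2}; this is shorter and avoids any fine analysis of the Duhamel term in $L^q$. Your direct attack on weighted $L^q$ via the decomposition $A-B+D$ is more laborious but self-contained: the treatment of $B$ on $(0,t/2)$ (semigroup splitting plus dominated convergence for the Taylor remainder, and the time-increment estimate for the kernel) is carried out correctly, though it requires the auxiliary bound $\|\partial_\tau E_\alpha(\tau)\|_1\lesssim \tau^{-1}$, which is not stated in the paper; your derivation of it from $\partial_\tau E_\alpha=\Delta P_2\ast P_\alpha-P_2\ast(-\Delta)^{\alpha/2}P_\alpha$ and self-similarity is sound. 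In short: both proofs work; the paper's is leaner thanks to the interpolation shortcut, while yours gives a sharper positivity constant and a direct $L^q$ argument.
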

\begin{remark}\label{remark1} The condition \eqref{conditionh} can be replaced by
$$ \int_{t_0}^\infty  t^{-\frac{N}{\alpha}(p-1)(1+\beta)}h(t)dt<\infty,\qquad\hbox{for any}\,\,t_0>0.$$
On the other hand, if $h\in L^\infty(0,\infty)$ and $p> 1+\frac{\alpha}{N(\beta+1)}$, then the condition \eqref{conditionh} in Theorem \ref{decay} is fulfilled. Indeed, as $ p>1+\frac{\alpha}{N(\beta+1)}\Rightarrow\frac{N(p-1)(\beta+1)}{\alpha}>1$, we have
$$\int_1^\infty  t^{-\frac{N}{\alpha}(p-1)(1+\beta)}h(t)dt\leq \|h\|_\infty\int_1^\infty  t^{-\frac{N}{\alpha}(p-1)(1+\beta)}dt<\infty.$$
\end{remark}

In the remaining range of $p$, the mass $M(t)$ converges to zero and
this phenomena can be interpreted as the domination of nonlinear
effects in the large time asymptotic of solutions to \eqref{eq}. Note here that the mass $\displaystyle M(t)=\int_{\mathbb{R}^N}u(x,t)\,dx$
 of every solution to the linear equation $u_t+\mathcal{L} u=0$ is constant.

\begin{theorem}\label{convto0}
Let $u=u(x,t)$ be a non-negative global mild solution of problem
$\eqref{eq}$. If $\displaystyle \inf_{t\geq 0}h(t)>0$ and $1<p\leq 1+\frac{\alpha}{N(\beta+1)}$, then
$$
\lim_{t\to\infty}M(t)=M_\infty=0.
$$
\end{theorem}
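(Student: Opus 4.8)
The plan is to argue by contradiction via the rescaled test function method. Suppose $M_\infty=\lim_{t\to\infty}M(t)>0$. Since $M$ is non-increasing, this gives $M(t)\geq M_\infty>0$ for all $t$, and in particular $\int_0^\infty\int_{\mathbb{R}^N}h(s)u^p\,dx\,ds=M(0)-M_\infty<\infty$, so that $\int_0^\infty\int_{\mathbb{R}^N}h(s)u^p\,dx\,ds$ is finite. Using $\inf_{t\geq0}h(t)=:h_0>0$ we get the cleaner bound $\int_0^\infty\int_{\mathbb{R}^N}u^p\,dx\,ds\leq h_0^{-1}(M(0)-M_\infty)<\infty$. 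The strategy is to test the (weak/mild) formulation of the equation against a space-time cutoff $\varphi(x,t)=\Phi_R(x)\,\eta_R(t)$, where $\Phi_R(x)=\langle x/R\rangle^{-q_0}$ with $N<q_0<N+\alpha p$ as in Lemma~\ref{lemma5}, and $\eta_R$ is a smooth temporal cutoff equal to $1$ on a suitable time interval and supported so that $\partial_t\eta_R$ scales correctly relative to the parabolic weight $t^\beta$. Because $\partial_t u+t^\beta\mathcal{L}u=-h(t)u^p$, integrating against $\varphi$ and moving derivatives onto the test function yields, schematically,
\begin{equation*}
\int\!\!\int h(t)u^p\varphi\,dx\,dt + \Big[\text{boundary/initial term in }u_0\Big] = \int\!\!\int u\,\big(-\partial_t\varphi + t^\beta\mathcal{L}\varphi\big)\,dx\,dt,
\end{equation*}
where the initial term, $\int_{\mathbb{R}^N}u_0\Phi_R\,dx$, is bounded (even convergent to $M(0)$ as $R\to\infty$) since $u_0\in L^1$.

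Next I would estimate the right-hand side by Young's inequality with exponents $p$ and $p/(p-1)$, splitting the spatial part from the temporal part. For the $t^\beta\mathcal{L}\varphi$ contribution, the spatial factor is handled exactly by Lemma~\ref{lemma5}: $\int_{\mathbb{R}^N}\Phi_R^{-1/(p-1)}|\mathcal{L}\Phi_R|^{p/(p-1)}\,dx\lesssim R^{-\frac{2p}{p-1}+N}+R^{-\frac{\alpha p}{p-1}+N}$, and since $\alpha<2$ the second term dominates for large $R$. The time factor $t^\beta$ must be tracked carefully: over the support of $\eta_R$, time ranges up to a value $T_R$ chosen as a suitable power of $R$, and one picks $T_R$ so that the powers of $R$ coming from the spatial integrals, the $t^\beta$ weights, and the $dt$-measure combine to a negative (or at worst zero) power of $R$. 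The natural choice, dictated by the scaling $t^{\beta+1}\sim R^\alpha$ built into Lemma~\ref{Property3}, is $T_R\sim R^{\alpha/(\beta+1)}$; with this choice the exponent of $R$ in the estimate of the nonlinear integral works out to $\big(N-\tfrac{\alpha p}{p-1}\big)+\tfrac{\alpha}{\beta+1}(\beta+1)=N-\tfrac{\alpha p}{p-1}+\alpha$, wait—more carefully, one gets precisely the threshold $p=1+\alpha/(N(\beta+1))$ as the borderline where this exponent changes sign. For the temporal cutoff term $\int\!\!\int u\,\partial_t\varphi$, one uses $|\partial_t\eta_R|\lesssim T_R^{-1}$ on an annular time region and bounds $u$ in $L^p$ against the finite quantity above, again with Young's inequality; this term carries an extra power of $T_R^{-1}$ and is therefore subdominant (or comparable).

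Assembling these estimates gives, in the subcritical range $p<1+\alpha/(N(\beta+1))$,
\begin{equation*}
\int_0^{T_R}\!\!\int_{\mathbb{R}^N} h(t)u^p\,\Phi_R\,dx\,dt \;\lesssim\; R^{-\delta}\quad\text{for some }\delta>0,
\end{equation*}
after absorbing a fractional power of the nonlinear integral itself (which is legitimate since that integral is finite). Letting $R\to\infty$ forces $\int_0^\infty\int_{\mathbb{R}^N}h(t)u^p\,dx\,dt=0$, hence $u\equiv 0$, contradicting $M_\infty>0$ unless the constant mass drop was zero — but if $u\equiv0$ then $M(t)\equiv M(0)$, and since we are in the regime where the nonlinearity is active this is incompatible with $u$ being a nontrivial solution. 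In the critical case $p=1+\alpha/(N(\beta+1))$ the exponent $\delta$ is exactly $0$, so the crude bound only gives $\int_0^\infty\int h u^p<\infty$ (which we already knew); here one refines the argument by replacing the full integral on the right by the integral over the time-space annulus where $\varphi$ is not locally constant, which tends to $0$ as $R\to\infty$ by dominated convergence (using $\int_0^\infty\int hu^p<\infty$), again yielding $\int_0^\infty\int hu^p=0$ and the same contradiction.

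The main obstacle I anticipate is bookkeeping the interaction between the spatial scale $R$ and the temporal scale $T_R$ through the weight $t^\beta$: one must choose $T_R$ as the correct power of $R$ so that \emph{both} the $(-\Delta)^{\alpha/2}$-part and the $\partial_t$-part land at or below the threshold exponent simultaneously, and verify that the local $L^1$ (not $L^\infty$) assumption on $h$ together with $\inf h>0$ suffices to control $\int h u^p$ from below on the relevant sets; the critical-case refinement, requiring a genuine vanishing-of-the-annulus argument rather than a bare power count, is the delicate endpoint.
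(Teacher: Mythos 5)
There is a genuine gap, and it sits at the central step. You have transplanted the capacity argument for a \emph{source} nonlinearity $+u^p$ into a problem with an \emph{absorption} nonlinearity $-h(t)u^p$, and the sign structure does not survive the transplant. Testing $\partial_t u+t^\beta\mathcal{L}u=-h(t)u^p$ against $\varphi$ gives
\[
\int_{\mathbb{R}^N}u_0\,\varphi(\cdot,0)\,dx-\iint h(t)u^p\varphi\,dx\,dt=\iint u\left(-\partial_t\varphi+t^\beta\mathcal{L}\varphi\right)dx\,dt,
\]
so the initial term and the nonlinear term enter with \emph{opposite} signs: what the method controls from above is their difference, which converges to $M_\infty$ as $R\to\infty$, not the nonlinear integral itself. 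Your claimed conclusion $\int_0^{T_R}\!\!\int h\,u^p\Phi_R\lesssim R^{-\delta}$, hence $\iint hu^p=0$ and $u\equiv 0$, is false: for nontrivial $u_0$ one has $\iint hu^p=M(0)-M_\infty$, which is generically positive, and your conclusion would in fact say the mass does not decay at all --- the opposite of the theorem. The correct (and simpler) endgame is direct, with no contradiction: after the $\varepsilon$-Young inequality one obtains $\int u_0\varphi_1-\iint(h+C\varepsilon)u^p\varphi\le C\,R^{N-\alpha/((\beta+1)(p-1))}$; letting $R\to\infty$ and using $\iint u^p\le(\inf h)^{-1}M(0)<\infty$ yields $M_\infty\le C\varepsilon$ for every $\varepsilon>0$, hence $M_\infty=0$. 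Your scaling bookkeeping ($T_R\sim R^{\alpha/(\beta+1)}$, Lemma~\ref{lemma5}, the exponent $N-\alpha/((\beta+1)(p-1))$) is the right machinery, but it is aimed at the wrong quantity.

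A second, smaller gap is the critical case $p=1+\alpha/(N(\beta+1))$. Your refinement (restrict to the region where $\varphi$ is not locally constant and invoke dominated convergence) handles only the $\partial_t\varphi$ term, which is genuinely supported on a time annulus. The $t^\beta\mathcal{L}\varphi$ term has no such annulus: $(-\Delta)^{\alpha/2}$ of a cutoff is nonzero everywhere, and at the critical exponent its contribution is $O(1)$ in $R$. The paper resolves this with a second spatial parameter $B$ in $\varphi_1(x)=\langle x/(BR)\rangle^{-q_0}$: after $R\to\infty$ the surviving term is $C\,B^{N-\alpha p/(p-1)}$ with a negative exponent, and one then sends $B\to\infty$. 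Some device of this kind is needed; a bare power count plus the time-annulus argument does not close the critical case.
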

An immediate consequence of Theorems \ref{decay} and \ref{convto0}, using Remark \ref{remark1}, is the following

\begin{theorem}\label{general}
Let $u=u(x,t)$ be a non-negative nontrivial global mild solution of problem
$\eqref{eq}$. If $h\in L^\infty(0,\infty)$ and $\displaystyle \inf_{t\geq 0}h(t)>0$, then we have two cases:
\begin{itemize}
\item If $p>1+\frac{\alpha}{N(\beta+1)}$, then $M_\infty>0$. Moreover, \eqref{2.3} holds for all $q\in[1,\infty)$.
\item If $1<p\leq 1+\frac{\alpha}{N(\beta+1)}$, then $M_\infty=0$.
\end{itemize}
\end{theorem}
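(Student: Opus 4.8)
The plan is to deduce Theorem \ref{general} directly from the two dichotomy results already established, so that the only work is to verify that their hypotheses are met under the single standing assumptions $h\in L^\infty(0,\infty)$ and $\inf_{t\ge 0}h(t)>0$.

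First I would treat the supercritical range $p>1+\frac{\alpha}{N(\beta+1)}$. Here the strategy is to check the integrability condition \eqref{conditionh} required by Theorem \ref{decay}. Since $p>1+\frac{\alpha}{N(\beta+1)}$ is equivalent to $\frac{N}{\alpha}(p-1)(\beta+1)>1$, the integral $\int_1^\infty t^{-\frac{N}{\alpha}(p-1)(1+\beta)}\,dt$ converges; bounding $h$ by $\|h\|_\infty$ then gives $\int_1^\infty t^{-\frac{N}{\alpha}(p-1)(1+\beta)}h(t)\,dt<\infty$, exactly as recorded in Remark \ref{remark1}. With \eqref{conditionh} in hand, Theorem \ref{decay} applies to the non-negative nontrivial global mild solution $u$ and yields both $M_\infty>0$ and the refined asymptotics \eqref{2.3} for every $q\in[1,\infty)$.

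For the critical/subcritical range $1<p\le 1+\frac{\alpha}{N(\beta+1)}$, no further preparation is needed: the hypothesis $\inf_{t\ge 0}h(t)>0$ is precisely what Theorem \ref{convto0} demands, so applying it to $u$ immediately gives $M_\infty=0$. Since the two cases exhaust all $p>1$, the argument is complete.

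I do not anticipate any real obstacle here beyond the elementary observation already made in Remark \ref{remark1}; the entire substance of the statement is carried by Theorems \ref{decay} and \ref{convto0}, whose proofs occupy Sections \ref{sec4} and \ref{sec5}.
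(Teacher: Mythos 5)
Your proposal is correct and matches the paper exactly: the paper presents Theorem \ref{general} as an immediate consequence of Theorems \ref{decay} and \ref{convto0}, with the verification of condition \eqref{conditionh} under $h\in L^\infty(0,\infty)$ and $p>1+\frac{\alpha}{N(\beta+1)}$ carried out precisely as you describe in Remark \ref{remark1}. Nothing further is needed.
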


\section{Proof of Theorem \ref{decay}}\label{sec4}
We start by proving $M_\infty>0$. From \eqref{mass}, we have
\begin{equation}\label{3.1}
    0\leq\int_{\mathbb{R}^N}u(x,t)\,dx=\int_{\mathbb{R}^N}u_0(x)\,dx - \int_0^t\int_{\mathbb{R}^N}h(s)u^p(x,s)\,dx\,ds.
\end{equation}
 Hence, for $u_0\in L^1(\mathbb{R}^N)$, we immediately obtain
\begin{equation}\label{3.2}
u\in L^\infty((0,\infty),L^1(\mathbb{R}^N))\quad\hbox{and}\quad  \int_0^\infty\int_{\mathbb{R}^N}h(s)u^p(x,t)\,dx\,dt\leq \|u_0\|_1<\infty.
\end{equation}
As $u$ is a mild solution, we immediately get the following estimate
$$
0\leq u(t)\leq E_\alpha\left(\frac{t^{\beta+1}}{\beta+1}\right)\ast u_0(x),\qquad\hbox{for all}\,\,t> 0,
$$
which implies, using \eqref{Pr1}, for all $t> 0$, that
\begin{eqnarray}\label{HTP}
  \|u(t)\|^p_p &\leq&\left\|E_\alpha\left(\frac{t^{\beta+1}}{\beta+1}\right)\ast u_0\right\|^p_p\nonumber\\
  &\leq& \min\left\{C\,t^{-\frac{N(\beta+1)(p-1)}{2}}\|u_0\|^p_1,\,C\,t^{-\frac{N(\beta+1)(p-1)}{\alpha}}\|u_0\|^p_1,\,\|u_0\|^p_p\right\}\nonumber\\
  &=&H(t,p,\alpha,\beta,u_0).
\end{eqnarray}
Now, for fixed $\varepsilon\in(0,1]$, we denote by $u^\varepsilon=u^\varepsilon(x,t)$ the solution of \eqref{eq} with initial condition $\varepsilon u_0$. The comparison principle implies that
$$0\leq u^\varepsilon(x,t)\leq u(x,t),\qquad\hbox{for all}\,\,x\in\mathbb{R}^N\,\,t>0.$$
Hence,
$$M_\infty\geq M^\varepsilon_\infty\equiv\lim\limits_{t\rightarrow\infty}\int_{\mathbb{R}^N}u^\varepsilon(x,t)\,dx .$$
Therefore, to prove \eqref{2.4}, it suffices to show, for small $\varepsilon>0$ which will be determined later, that $M^\varepsilon_\infty >0$.
Applying \eqref{3.1} to $u^\varepsilon$ and letting $t\rightarrow\infty$, we obtain
\begin{equation}\label{3.11}
  M^\varepsilon_\infty = \varepsilon\left\{\int_{\mathbb{R}^N}u_0(x)\,dx -\frac{1}{\varepsilon} \int_0^\infty\int_{\mathbb{R}^N}h(t)\left(u^\varepsilon(x,t)\right)^p\,dx\,dt\right\}.
\end{equation}
Similarly, by applying \eqref{HTP} to $u^\varepsilon$, we get
$$ \|u^\varepsilon(t)\|^p_p\leq H(t,p,\alpha,\beta,\varepsilon u_0)=\varepsilon^pH(t,p,\alpha,\beta,u_0).$$
Hence
\begin{eqnarray*}
  \frac{1}{\varepsilon}\int_0^\infty\int_{\mathbb{R}^N}h(t)\left(u^\varepsilon (x,t)\right)^p\,dx\,dt&=& \frac{1}{\varepsilon}\int_0^\infty h(t)  \|u^\varepsilon(t)\|^p_p\,dt\\
&\leq&\varepsilon^{p-1}\int_0^\infty h(t)H(t,p,\alpha,\beta,u_0)\,dt.
\end{eqnarray*}
 It follows immediately from the definition of the function $H$ that
\begin{eqnarray*}
\int_{0}^{1}h(t)H(t,p,\alpha,\beta,u_0)\,dt&\leq& \|u_0\|_p^p\int_0^1h(t)dt<\infty,\\
\int_{1}^{\infty}h(t)H(t,p,\alpha,\beta,u_0)\,dt&\leq& C\,\|u_0\|_1^p\int_1^{\infty}\min\left\{C\,t^{-\frac{N(\beta+1)(p-1)}{2}},\,C\,t^{-\frac{N(\beta+1)(p-1)}{\alpha}}\right\}h(t)dt\\
&=& C\,\|u_0\|_1^p\int_1^{\infty}t^{-\frac{N(\beta+1)(p-1)}{\alpha}}h(t)dt<\infty.
\end{eqnarray*}
Consequently,
$$\int_{0}^{\infty}h(t)H(t,p,\alpha,\beta,u_0)\,dt<\infty,$$
which yields to
$$
\lim_{\varepsilon\rightarrow 0^+}\frac{1}{\varepsilon} \int_0^\infty\int_{\mathbb{R}^N}\left(u^\varepsilon(x,t)\right)^p\,dx\,dt=0.
$$
Hence, as $\displaystyle \int_{\mathbb{R}^N}u_0(x)\,dx>0$, there exists $\varepsilon_0\in(0,1)$ such that
$$\frac{1}{\varepsilon_0} \int_0^\infty\int_{\mathbb{R}^N}\left(u^{\varepsilon_0}(x,t)\right)^p\,dx\,dt\leq \frac{1}{2}\int_{\mathbb{R}^N}u_0(x)\,dx,$$
which implies that
$$M_\infty\geq M^{\varepsilon_0}_\infty\geq \frac{\varepsilon_0}{2}\int_{\mathbb{R}^N}u_0(x)\,dx>0.$$
This completes the proof of \eqref{2.4}. The proof of the asymptotic behavior \eqref{2.3} will be divided into two cases.\\

\noindent Case of $p=1$. Applying Minkowski's inequality, we have
\begin{eqnarray}\label{4.1}
\left\|u(t)-M_\infty E_\alpha\left(\frac{t^{\beta+1}}{\beta+1}\right)\right\|_1 &\leq& \left\|u(t)-E_\alpha\left(\frac{t^{\beta+1}}{\beta+1}-\frac{t_0^{\beta+1}}{\beta+1}\right)\ast u(t_0)\right\|_1\nonumber\\
&{}&+\left\|E_\alpha\left(\frac{t^{\beta+1}}{\beta+1}-\frac{t_0^{\beta+1}}{\beta+1}\right)\ast u(t_0)-M(t_0)E_\alpha\left(\frac{t^{\beta+1}}{\beta+1}-\frac{t_0^{\beta+1}}{\beta+1}\right)\right\|_1\nonumber\\
&{}&+\left\|M(t_0)E_\alpha\left(\frac{t^{\beta+1}}{\beta+1}-\frac{t_0^{\beta+1}}{\beta+1}\right)-M(t_0)E_\alpha\left(\frac{t^{\beta+1}}{\beta+1}\right)\right\|_1\nonumber\\
&{}&+\left\|M(t_0)E_\alpha\left(\frac{t^{\beta+1}}{\beta+1}\right)-M_\infty E_\alpha\left(\frac{t^{\beta+1}}{\beta+1}\right)\right\|_1,
\end{eqnarray}
for all $t\geq t_0 \geq 0$. As $u$ is a mild solution, using \eqref{Pr1}, we get from \eqref{IEG} that
\begin{equation}\label{4.2}
\left\|u(t)-E_\alpha\left(\frac{t^{\beta+1}}{\beta+1}-\frac{t_0^{\beta+1}}{\beta+1}\right)\ast u(t_0)\right\|_1\leq C\int_{t_0}^t h(s)\|u(s)\|_p^p ds,
\end{equation} 
for all $t\geq t_0 \geq 0$. On the other hand, applying Lemma \ref{Taylor} with $g=u_0$, we get
\begin{equation}\label{4.3}
\lim_{t\rightarrow\infty}\left\|E_\alpha\left(\frac{t^{\beta+1}}{\beta+1}-\frac{t_0^{\beta+1}}{\beta+1}\right)\ast u(t_0)-M(t_0)E_\alpha\left(\frac{t^{\beta+1}}{\beta+1}-\frac{t_0^{\beta+1}}{\beta+1}\right)\right\|_1=0.
\end{equation} 
Applying again Lemma \ref{Taylor} with $g=E_\alpha\left(\frac{t_0^{\beta+1}}{\beta+1}\right)$, using $(ii)$ Lemma \ref{Property1}, we obtain
$$
\lim_{t\rightarrow\infty}\left\|E_\alpha\left(\frac{t^{\beta+1}}{\beta+1}-\frac{t_0^{\beta+1}}{\beta+1}\right)-E_\alpha\left(\frac{t^{\beta+1}}{\beta+1}-\frac{t_0^{\beta+1}}{\beta+1}\right)\ast E_\alpha\left(\frac{t_0^{\beta+1}}{\beta+1}\right)\right\|_1=0,
$$ 
which implies, using $(iii)$ Lemma \ref{Property1}, that
\begin{eqnarray}\label{4.4}
&{}&\left\|M(t_0)E_\alpha\left(\frac{t^{\beta+1}}{\beta+1}-\frac{t_0^{\beta+1}}{\beta+1}\right)-M(t_0)E_\alpha\left(\frac{t^{\beta+1}}{\beta+1}\right)\right\|_1\nonumber\\
&{}&\leq M(0)\left\|E_\alpha\left(\frac{t^{\beta+1}}{\beta+1}-\frac{t_0^{\beta+1}}{\beta+1}\right)-E_\alpha\left(\frac{t^{\beta+1}}{\beta+1}\right)\right\|_1\nonumber\\
&{}&= M(0)\left\|E_\alpha\left(\frac{t^{\beta+1}}{\beta+1}-\frac{t_0^{\beta+1}}{\beta+1}\right)-E_\alpha\left(\frac{t^{\beta+1}}{\beta+1}-\frac{t_0^{\beta+1}}{\beta+1}\right)\ast E_\alpha\left(\frac{t_0^{\beta+1}}{\beta+1}\right)\right\|_1\longrightarrow 0,
\end{eqnarray}
when $t$ goes to $\infty$. In addition, using $(ii)$ Lemma \ref{Property1}, we have
\begin{equation}\label{4.5}
\left\|M(t_0)E_\alpha\left(\frac{t^{\beta+1}}{\beta+1}\right)-M_\infty E_\alpha\left(\frac{t^{\beta+1}}{\beta+1}\right)\right\|_1\leq |M(t_0)-M_\infty|.
\end{equation} 

Therefore, inserting \eqref{4.2}-\eqref{4.5} into \eqref{4.1}, we arrive at
\begin{equation}
\limsup_{t\rightarrow\infty}\left\|u(t)-M_\infty E_\alpha\left(\frac{t^{\beta+1}}{\beta+1}\right)\right\|_1\leq C\int_{t_0}^\infty h(s)\|u(t)\|_p^p ds+|M(t_0)-M_\infty|.
\end{equation}
Letting $t_0\to 0$, it follows that 
\begin{equation}
\lim\limits_{t\to\infty}\left\|u(t)-M_\infty E_\alpha\left(\frac{t^{\beta+1}}{\beta+1}\right)\right\|_1=0.
\end{equation}
\noindent Case of $p>1$. On the one hand, for each $m\in[1,+\infty]$, using \eqref{Pr1} and \eqref{P_2} respectively, we get
\begin{equation}\label{E1}
\|u(t)\|_m\leq\left\|E_\alpha\left(\frac{t^{\beta+1}}{\beta+1}\right) \ast u_0\right\|_m \leq  Ct^{-\frac{N}{\alpha}\left(1-\frac{1}{m}\right)(1+\beta)}\|u_0\|_1,
\end{equation}
and
\begin{eqnarray}\label{E2}
\left\|E_\alpha\left(\frac{t^{\beta+1}}{\beta+1}\right)\right\|_m&=& \left\|P_\alpha\left(\frac{t^{\beta+1}}{\beta+1}\right)\ast P_2\left(\frac{t^{\beta+1}}{\beta+1}\right)\right\|_m\nonumber\\
&\leq& C\,t^{-\frac{N}{\alpha}\left(1-\frac{1}{m}\right)(1+\beta)} \left\| P_2\left(\frac{t^{\beta+1}}{\beta+1}\right)\right\|_1\nonumber\\
&=&C\,t^{-\frac{N(m-1)}{\alpha m}(1+\beta)},
\end{eqnarray}
where we have used the fact that $\|P_2(t)\|_1=1$, for all $t>0$. On the other hand, for a fixed $m\in[1,+\infty]$, for every $q\in [1,m)$, by applying the Minkowski and H\"older inequalities, we obtain
\begin{equation}\label{final}
\left\|u(t)-M_\infty E_\alpha\left(\frac{t^{\beta+1}}{\beta+1}\right)\right\|_q\leq \left\|u(t)-M_\infty E_\alpha\left(\frac{t^{\beta+1}}{\beta+1}\right)\right\|_1^{1-\delta}\left(\|u(t)\|_m^\delta+\left\|M_\infty E_\alpha\left(\frac{t^{\beta+1}}{\beta+1}\right)\right\|_m^\delta\right)
\end{equation}  
where $\delta=(1-1/q)/(1-1/m)$. Inserting \eqref{E1} and \eqref{E2} into \eqref{final}, we get
$$
t^{\frac{N}{\alpha}\left(1-\frac{1}{q}\right)(1+\beta)}\left\|u(t)-M_\infty
P\left(\frac{t^{\beta+1}}{\beta+1}\right)\right\|_q\leq C \left\|u(t)-M_\infty E_\alpha\left(\frac{t^{\beta+1}}{\beta+1}\right)\right\|_1^{1-\delta}\longrightarrow 0,\quad\hbox{as}\;\; t\to\infty.
$$
This completes the proof of Theorem \ref{decay}.\hfill$\square$


\section{Proof of Theorem \ref{convto0}}\label{sec5}
The proof of Theorem \ref{convto0} is based on the rescaled test function method. Let $\varphi\in C_c^\infty([0,\infty)\times \mathbb{R}^N)$ be such that
$$\varphi(x,t)=\varphi_1(x)(\varphi_2(t))^\ell,\qquad \ell=\frac{2p-1}{p-1},$$
where 
$$ \varphi_1(x) =\langle {x}/{BR}\rangle^{-q_0}=\left(1+\left|\frac{x}{BR}\right|^2\right)^{-q_0/2},\qquad  \varphi_2(t) = \psi\left(\frac{t^{\beta+1}}{R^\alpha}\right),\qquad R>0,
$$
with $N<q_0<N+\alpha p$, and
\begin{equation}\label{psi}
\psi(r)=
\left\{
\begin{array}{ll}
 1&\,\, \text{if} \quad 0\leq r \leq 1, \\
 \searrow &\,\, \text{if} \quad 1\leq r \leq 2, \\
 0 &\,\, \text{if} \quad r\geq 2.
\end{array}
\right.
\end{equation}
The constant $B>0$ in the definition of $\varphi_1$ is fixed and will be chosen later. Since $u$ is a mild solution, we get 
\begin{eqnarray}\label{5.1}
&{}&\int_{\mathbb{R}^N}u_0(x)\varphi_1(x)\,dx-\int_{\Omega}\int_{\mathbb{R}^N}h(t)u^p(x,t)\varphi(x,t)\,dx\,dt\nonumber\\
&{}&\,=\int_{\Omega}\int_{\mathbb{R}^N}t^\beta (\varphi_2(t))^\ell u(x,t)\mathcal{L}(\varphi_1(x)) \,dx\,dt-\ell \int_{\Omega}\int_{\mathbb{R}^N}u(x,t)(\varphi_2(t))^{\ell-1} \varphi_1(x) \varphi'_2(t)\,dx\,dt\nonumber\\
&{}&\leq \int_{\Omega}\int_{\mathbb{R}^N}u(x,t) \,t^\beta (\varphi_2(t))^\ell \left|\mathcal{L}(\varphi_1(x))\right| \,dx\,dt + \ell \int_{\Omega}\int_{\mathbb{R}^N}u(x,t)\,\varphi_1(x)(\varphi_2(t))^{\ell-1}\left| \varphi'_2(t)\right|\,dx\,dt\nonumber\\
&{}&=: I_1+I_2,
\end{eqnarray}
where $\Omega=\{t\in[0,\infty);\, t\leq (2R^\alpha)^{\frac{1}{\beta+1}}\}$. Applying the $\varepsilon-$Young inequality,
$$ab\leq \varepsilon a^p+C_\varepsilon b^q,\quad\hbox{whith}\,\, q=p/(p-1)=\ell-1,$$
we get
\begin{eqnarray}\label{I1}
I_1&=& \int_{\Omega}\int_{\mathbb{R}^N}u(x,t)(\varphi(t))^{1/p} (\varphi(t))^{-1/p} \,t^\beta (\varphi_2(t))^\ell \left|\mathcal{L}(\varphi_1(x))\right| \,dx\,dt\nonumber\\
&\leq& \varepsilon\int_{\Omega}\int_{\mathbb{R}^N}u^p(x,t)\varphi(t)\,dx\,dt+ C_\varepsilon\int_{\Omega}\int_{\mathbb{R}^N}t^{\frac{\beta p}{p-1}} (\varphi_2(t))^{\ell}(\varphi_1(x))^{-1/(p-1)}\left|\mathcal{L}\varphi_1(x)\right|^{p/(p-1)} \,dx\,dt,
\end{eqnarray}
and 
\begin{eqnarray}\label{I2}
I_2&=& \ell \int_{\Omega}\int_{\mathbb{R}^N}u(x,t)(\varphi(t))^{1/p} (\varphi(t))^{-1/p}\varphi_1(x)(\varphi_2(t))^{\ell-1}\left|\varphi'_2(t)\right|\,dx\,dt\nonumber\\
&\leq& \ell\varepsilon\int_{\Omega}\int_{\mathbb{R}^N}u^p(x,t)\varphi(t)\,dx\,dt+ \ell \,C_\varepsilon\int_{\Omega}\int_{\mathbb{R}^N} \varphi_1(x)\varphi_2(t)\left|\varphi'_2(t)\right|^{\frac{p}{p-1}} \,dx\,dt.
\end{eqnarray}
Combining \eqref{I1}, \eqref{I2} and \eqref{5.1}, we arrive at
\begin{eqnarray}\label{5.2}
&{}&\int_{\mathbb{R}^N}u_0(x)\varphi_1(x)\,dx-\int_{\Omega}\int_{\mathbb{R}^N}(h(t)+ (1+\ell)\varepsilon)u^p(x,t)\varphi(x,t)\,dx\,dt\nonumber\\
&{}&\leq C_\varepsilon\int_{\Omega}\int_{\mathbb{R}^N}t^{\frac{\beta p}{p-1}} (\varphi_2(t))^{\ell}(\varphi_1(x))^{-1/(p-1)}\left|\mathcal{L}\varphi_1(x)\right|^{p/(p-1)} \,dx\,dt\nonumber\\
&{}&\quad +\quad  \ell \,C_\varepsilon\int_{\Omega}\int_{\mathbb{R}^N} \varphi_1(x)\varphi_2(t)\left|\varphi'_2(t)\right|^{\frac{p}{p-1}} \,dx\,dt\nonumber\\
&{}&=: J_1+J_2.
\end{eqnarray}
Using Lemma \ref{lemma5}, by choosing $R,B\geq 1$, we obtain
\begin{eqnarray*}
J_1&=&C_\varepsilon\int_{\Omega}t^{\frac{\beta p}{p-1}} (\varphi_2(t))^{\ell}\,dt\int_{\mathbb{R}^N}(\varphi_1(x))^{-1/(p-1)}\left|\mathcal{L}\varphi_1(x)\right|^{p/(p-1)} \,dx\\
&\leq& C\,B^{-\frac{\alpha p}{p-1}+N}\,R^{-\frac{\alpha p}{p-1}+N}\int_{\Omega}t^{\frac{\beta p}{p-1}} (\varphi_2(t))^{\ell}\,dt.
\end{eqnarray*}
By the change of variable $\eta =t^{\beta+1}R^{-\alpha}$, and the fact that $\psi\leq 1$, we get
\begin{eqnarray}\label{5.3}
J_1&\leq&C\,B^{-\frac{\alpha p}{p-1}+N}\,R^{-\frac{\alpha }{(\beta+1)(p-1)}+N}\int_0^2 \eta^{\frac{\beta }{(\beta+1)(p-1)}} (\psi(\eta))^{\ell}\,d\eta\leq C\,B^{-\frac{\alpha p}{p-1}+N}\,R^{-\frac{\alpha }{(\beta+1)(p-1)}+N}.
\end{eqnarray}
On the other hand, using the change of variable $y=(BR)^{-1}\,x$, we have
\begin{eqnarray*}
J_2&=&\ell\,C_\varepsilon\int_{\mathbb{R}^N} \varphi_1(x)\,dx \int_{\Omega}\varphi_2(t)\left|\varphi'_2(t)\right|^{\frac{p}{p-1}} \,dt\\
&=& C\,(BR)^{N}\int_{\mathbb{R}^N} \langle y\rangle^{-q_0}\,dy\int_{\Omega}\varphi_2(t)\left|\varphi'_2(t)\right|^{\frac{p}{p-1}} \,dt.
\end{eqnarray*}
By the change of variable $\eta =t^{\beta+1}R^{-\alpha}$, and $q_0>N$, we get
\begin{eqnarray}\label{5.4}
J_2&\leq&C\,B^{N}R^{-\frac{\alpha }{(\beta+1)(p-1)}+N}\int_0^2 \eta^{\frac{\beta }{(\beta+1)(p-1)}} \psi(\eta)\left|\psi'(\eta)\right|^{\frac{p}{p-1}} \,d\eta \leq C\,R^{-\frac{\alpha }{(\beta+1)(p-1)}+N}\,B^N.
\end{eqnarray}
Using \eqref{5.3} and \eqref{5.4} into \eqref{5.2}, we conclude that
\begin{equation}\label{5.5}
\int_{\mathbb{R}^N}u_0(x)\varphi_1(x)\,dx-\int_{\Omega}\int_{\mathbb{R}^N}(h(t)+ (1+\ell)\varepsilon)u^p(x,t)\varphi(x,t)\,dx\,dt\leq C\left(B^{N-\frac{\alpha p}{p-1}}+B^N\right)R^{N-\frac{\alpha }{(\beta+1)(p-1)}}.
\end{equation}
As
$$N-\frac{\alpha }{(\beta+1)(p-1)}\leq 0\quad \Longleftrightarrow\quad p\leq 1+\frac{\alpha}{N(\beta+1)},$$
we have two cases to distinguish.\\

{\bf Case 1:} $p< 1+\alpha/N(\beta+1)$. By choosing $B=1$, letting $R\rightarrow\infty$, and using the Lebesgue dominated convergence theorem in \eqref{5.5}, we get
\begin{equation}\label{5.6}
M_\infty=\int_{\mathbb{R}^N}u_0(x)\,dx-\int_0^\infty\int_{\mathbb{R}^N}h(t)u^p(x,t)\,dx\,dt\leq (1+\ell)\varepsilon\int_0^\infty\int_{\mathbb{R}^N}u^p(x,t)\,dx\,dt.
\end{equation}
Using \eqref{3.2} and the fact that $\displaystyle \inf_{t\geq 0}h(t)>0$, the integral term in the right hand-side in \eqref{5.6} is bounded and therefore
$$M_\infty\leq C\,\varepsilon.$$
Since $\varepsilon> 0$ can be chosen arbitrary small, we immediately conclude that $M_\infty= 0$.\\

{\bf Case 2:}  $p= 1+\alpha/N(\beta+1)$.  We estimate $I_2$ in \eqref{5.1}  by using H\"{o}lder's inequality instead of $\varepsilon-$Young's inequality; we obtain
\begin{eqnarray*}
I_2&=& \ell \int_{\Omega}\int_{\mathbb{R}^N}u(x,t)(\varphi(t))^{1/p} (\varphi(t))^{-1/p}\varphi_1(x)(\varphi_2(t))^{\ell-1}\left|\varphi'_2(t)\right|\,dx\,dt\nonumber\\
&\leq& \ell\left(\int_{\widetilde{\Omega}}\int_{\mathbb{R}^N}u^p(x,t)\varphi(t)\,dx\,dt\right)^{1/p}\left(\int_{\Omega}\int_{\mathbb{R}^N} \varphi_1(x)\varphi_2(t)\left|\varphi'_2(t)\right|^{\frac{p}{p-1}} \,dx\,dt\right)^{(p-1)/p}\nonumber\\
&=& C\,J_2^{(p-1)/p}\left(\int_{\widetilde{\Omega}}\int_{\mathbb{R}^N}u^p(x,t)\varphi(t)\,dx\,dt\right)^{1/p},
\end{eqnarray*}
where $J_2$ is defined in \eqref{5.2}, and $\widetilde{\Omega}=\{t\in[0,\infty);\, R^{\frac{\alpha}{\beta+1}}\leq t\leq (2R^\alpha)^{\frac{1}{\beta+1}}\}$ is the support of $\varphi'_2(t)$. Therefore, by using \eqref{I1}, we conclude from \eqref{5.1} that
\begin{eqnarray}\label{5.7}
&{}&\int_{\mathbb{R}^N}u_0(x)\varphi_1(x)\,dx-\int_{\Omega}\int_{\mathbb{R}^N}(h(t)+ \varepsilon)u^p(x,t)\varphi(x,t)\,dx\,dt\nonumber\\
&{}&\leq J_1+C\,J_2^{(p-1)/p}\left(\int_{\widetilde{\Omega}}\int_{\mathbb{R}^N}u^p(x,t)\varphi(t)\,dx\,dt\right)^{1/p},
\end{eqnarray}
where $J_1$ is defined in \eqref{5.2}. Using \eqref{5.3} and \eqref{5.4} into \eqref{5.7}, we arrive at
\begin{eqnarray}\label{5.8}
&{}&\int_{\mathbb{R}^N}u_0(x)\varphi_1(x)\,dx-\int_{\Omega}\int_{\mathbb{R}^N}(h(t)+ \varepsilon)u^p(x,t)\varphi(x,t)\,dx\,dt\nonumber\\
&&\leq C\,B^{N-\frac{\alpha p}{p-1}}\,R^{N-\frac{\alpha }{(\beta+1)(p-1)}}+C\,R^{\frac{N(p-1)}{p}-\frac{\alpha}{(\beta+1)p}}\,B^\frac{N(p-1)}{p} \left(\int_{\widetilde{\Omega}}\int_{\mathbb{R}^N}u^p(x,t)\varphi(t)\,dx\,dt\right)^{1/p}.
\end{eqnarray}
Note that, as \eqref{3.2} and $\displaystyle \inf_{t\geq 0}h(t)>0$ imply $u\in L^p((0,\infty),L^p(\mathbb{R}^N))$, we can see that
$$
\int_{\widetilde{\Omega}}\int_{\mathbb{R}^N}u^p(x,t)\varphi(t)\,dx\,dt\longrightarrow 0
\quad\hbox{as}\quad R\to\infty.$$
Therefore, by letting $R\to\infty$ in \eqref{5.8} and using the dominated convergence theorem, we infer that
\begin{equation}\label{5.9}
M_\infty-\varepsilon\int_0^\infty\int_{\mathbb{R}^N} u^p(x,t)\,dx\,dt \leq C\,B^{N-\frac{\alpha p}{p-1}}.
\end{equation}
At this stage, letting $B\to\infty$ and using again $u\in L^p((0,\infty),L^p(\mathbb{R}^N))$, we get
$$M_\infty\leq \varepsilon\int_0^\infty\int_{\mathbb{R}^N} u^p(x,t)\,dx\,dt\leq C\,\varepsilon,$$
which implies that $M_\infty= 0$ since $\varepsilon> 0$ can be arbitrary small. This completes the proof of Theorem \ref{convto0}. \hfill$\square$


\end{document}